\newtheorem{definition}{Definition}
\newtheorem{theorem}{Theorem}
\newtheorem{lemma}{Lemma}
\newtheorem{remark}{Remark}
\newtheorem{example}{Example}
\newenvironment{proof}{\noindent{\it Proof. }\rm}
{\unskip\nobreak\hfil\penalty50\hskip1em\hbox{}
\nobreak\hfill\qed\par\smallskip}
\def\qed{\vrule height1ex width1ex depth0pt}
\begin{document}
\title{Shilov boundary for "holomorphic functions"  on a quantum matrix ball}
\author{Daniil Proskurin \vspace{0.1cm}\\
{\footnotesize \sl Kyiv Taras Shevchenko University, Cybernetics Department,
Volodymyrska, 64, }\\
{\footnotesize \sl 01033, Kyiv, Ukraine}\\
 {\footnotesize \texttt{prosk@univ.kiev.ua}}\vspace{0.3cm}\\
Lyudmila  Turowska \vspace{0.1cm}\\
{\footnotesize \sl Department of Mathematics, Chalmers University of
Technology and University of Gothenburg, }\\
{\footnotesize \sl 412 96 G\"oteborg, Sweden} \\
{\footnotesize \texttt{turowska@chalmers.se}}}

\date{}
\maketitle{}
\abstract{We describe the Shilov boundary ideal for a q-analog of algebra of holomorphic functions on the unit ball in the space of $2\times 2$ matrices.}

\footnotetext{2000 Mathematics Subject
Classification: Primary  17B37; Secondary 20G42, 46L07}
\bigskip
\section{Introduction}
The Shilov boundary of a compact Hausdorff space $X$ relative  a uniform algebra $\mathcal A$ in $C(X)$  is the smallest closed subset $K\subset X$ such that every function in $\mathcal A$ achieves its maximum modulus on $K$, a notion that is closely related to the maximum modules principle in complex analysis.

 One of the most important developments in Analysis in recent years has been "quantisation", starting with the advent of the theory of operator spaces in the 1980's.
 A quantisation of the Shilov boundary is
a Shilov boundary ideal of a $C^*$-algebra, that was introduced by W. Arveson in his foundational papers
\cite{arveson, arveson2} and studied intensively by many authors.

In the middle of 1990's within the framework of the quantum group theory L.Vaksman and his coauthors started a "quantisation" of bounded symmetric domains (see \cite{vaksman-book} and references therein). One of the simplest of such domains is  the matrix ball
$\mathbb U=\{z\in Mat_{m,n}: zz^*\leq I\}$, where $Mat_{m,n}$ is the algebra of complex $m\times n$ matrices. Its $q$-analog was studied in \cite{sv, vaksman-matrix ball} where the authors defined a non-commutative counterpart of the polynomial algebra  in the space $Mat_{m,n}$, the  $*$-algebra $Pol(Mat_{m,n})_q$.
A $q$-analog of polynomial algebra on the Shilov boundary $S(\mathbb U)$ of the matrix ball $\mathbb U$ and the corresponding Cauchy-Szeg\"o integral representation that recovers holomorphic functions from its values on the Shilov boundary was studied in \cite{vaksman-matrix ball}. The authors used a purely algebraic approach "quantizing" a well known procedure for producing the Shilov boundary in the classical case.  In particular, they constructed a $*$-homomorphism $\psi:Pol(Mat_{m,n})_q\to Pol(S(\mathbb U))_q$ that corresponds to the restriction of the polynomials onto the Shilov boundary in the classical case. That the kernel of $\psi$ gives rise to Arveson's Shilov boundary ideal for the $q$-analog of holomorphic functions on the unit ball
of $Mat_{n,1}=\mathbb C^n$ was shown in \cite{vaksman-boundary}. In this paper we prove the statement for the case $m=n=2$.

Our approach relies on a classification of irreducible representations of $Pol(Mat_{2,2})_q $ obtained in \cite{T} and elaborates  methods of quantum groups and  the
 Sz.-Nagy's unitary dilation theory. We note that all irreducible representations of $Pol(Mat_{m,n})_q$ are known only in the cases when either $m=1$, or $n=1$ or $m=n=2$.

In this paper we use the following standard notations: ${\mathbb R}$ is the set of real numbers, $\mathbb Z$ denotes the set of integers, $\mathbb Z_+=\{0,1,2,\ldots\}$,
$\mathbb N=\{1,2,\ldots\}$. All algebras are assumed to be unital one over the field of complex numbers $\mathbb C$ and $q\in (0,1)$. We write $Mat_{n}$ for the space of $n\times n$ complex matrices. By $\{e_n:n\in\mathbb Z_+\}$ we denote the standard basis in the Hilbert space $\ell^2(\mathbb Z_+)$.

\section{The $*$-algebra $Pol(\mathbb C^n)_q$}

 The $*$-algebra $Pol(\mathbb C^n)_q$ is a $*$-algebra generated by $z_j$, $j=1,\ldots,n$, subject to the relations:
\begin{eqnarray*}
z_jz_k&=&qz_kz_j, j<k,\\
z_j^*z_k&=&qz_kz_j^*, j\ne k,\\
z_j^*z_j&=&q^2z_jz_j^*+(1-q^2)(1-\sum_{k>j}z_kz_k^*).
\end{eqnarray*}
This $*$-algebra is a $q$-analog of the $*$-algebra of polynomials on $\mathbb C^n$. It  was first introduced by Pusz and Woronowicz in \cite{pusz-woronowicz} but in terms of slightly different generators (see a remark in \cite[section 3]{vaksman-boundary}). 
Its irreducible representations are well-known, see \cite{pusz-woronowicz}.
If $n=1$ we have the following list of representations up to unitary equivalence:
\begin{enumerate}
\item
the Fock representation $\rho_F$ on $\ell^2(\mathbb Z_+)$:
$\rho_F(z_1)e_n=\sqrt{1-q^{2n+2}}e_{n+1}$,
\item one-dimensional representations $\rho_\varphi$, $\varphi\in[0,2\pi)$: $\rho_{\varphi}(z_1)=e^{i\varphi}$.
\end{enumerate}

The $*$-algebra $\mathbb C[SU_2]_q$ of regular functions on the quantum $SU_2$  is given by its generators $t_{ij}$, $i,j=1,2$, satisfying the relations:
\begin{eqnarray}
&&t_{11}t_{21}=qt_{21}t_{11}, \ t_{11}t_{12}=qt_{12}t_{11},\  t_{12}t_{21}=t_{21}t_{12},\nonumber\\
&&t_{22}t_{21}=q^{-1}t_{21}t_{11},\  t_{22}t_{12}=q^{-1}t_{12}t_{22}, \label{r2}\\
&&t_{11}t_{22}-t_{22}t_{11}=(q-q^{-1})t_{12}t_{21},\  t_{11}t_{22}-qt_{12}t_{21}=1,\nonumber\\
&&t_{11}^*=t_{22}, t_{12}^*=-qt_{21}. \nonumber
\end{eqnarray}

By \cite{soibelman-vaksman} any irreducible representation of $\mathbb C[SU_2]_q$ is unitarily equivalent to one of the  following:
\begin{enumerate}
\item one-dimensional representations:
\begin{equation}
\xi_\varphi(t_{11})=e^{i\varphi}, \xi_\varphi(t_{21})=0, \varphi\in[0,2\pi).\label{onedim}
\end{equation}
\item infinite-dimensional representations $\pi_\varphi$, $\varphi\in[0, 2\pi)$, on $\ell^2({\mathbb Z}_+)$:
\begin{eqnarray}
&&\pi_\varphi(t_{11})e_0=0, \pi_\varphi(t_{11})e_k=(1-q^{2k})^{1/2}e_{k-1}, k\geq 1,\nonumber\\
&&\pi_{\varphi}(t_{21})e_k=q^{k}e^{i\varphi}e_k,\label{csu2}\\
&&\pi_\varphi(t_{22})e_k=(1-q^{2(k+1)})^{1/2}e_{k+1},\nonumber\\
&&\pi_{\varphi}(t_{12})e_k=-q^{k+1}e^{-i\varphi}e_k.\nonumber
\end{eqnarray}
\end{enumerate}

\section{The $*$-algebra $Pol(Mat_{2})_q$ and $*$-representations}\label{secpolm2q}
The $*$-algebra $Pol(Mat_{2})_q$, a  $q$-analog of
polynomials on the space  $Mat_{2}$  of complex $2\times 2$ matrices, introduced in \cite{sv}, is
given by its generators  $\{z_{a}^{\alpha}\}_{a=1,2;\alpha=1,2}$ and the
following commutation relations:
\begin{equation}
\begin{array}{rclrcl}\label{ff1}
z_1^1z_2^1&=&qz_2^1z_1^1,& z_2^1z_1^2&=&z_1^2z_2^1,\\
z_1^1z_1^2&=&qz_1^2z_1^1,& z_2^1z_2^2&=&qz_2^2z_2^1,\\
z_1^1z_2^2-z_2^2z_1^1&=&(q-q^{-1})z_1^2z_2^1,& z_1^2z_2^2&=&qz_2^2z_1^2,\\
\end{array}
\end{equation}
\begin{equation}
\begin{array}{rcl}\label{ff2}
(z_1^1)^*z_1^1&=&q^2z_1^1(z_1^1)^*-
(1-q^2)(z_2^1(z_2^1)^*+z_1^2(z_1^2)^*)+\\
&&\qquad\quad+q^{-2}(1-q^2)^2z_2^2(z_2^2)^*+1-q^2,\\
(z_2^1)^*z_2^1&=&q^2z_2^1(z_2^1)^*-(1-q^2)z_2^2(z_2^2)^*+1-q^2,\\
(z_1^2)^*z_1^2&=&q^2z_1^2(z_1^2)^*-(1-q^2)z_2^2(z_2^2)^*+1-q^2,\\
(z_2^2)^*z_2^2&=& q^2z_2^2(z_2^2)^*+1-q^2,
\end{array}
\end{equation}
\begin{eqnarray}
&\begin{array}{rclrcl}\label{ff3}
(z_1^1)^*z_2^1-qz_2^1(z_1^1)^*&=&(q-q^{-1})z_2^2(z_1^2)^*,&
(z_2^2)^*z_2^1&=&qz_2^1(z_2^2)^*,\\
(z_1^1)^*z_1^2-qz_1^2(z_1^1)^*&=&(q-q^{-1})z_2^2(z_2^1)^*,&
(z_2^2)^*z_1^2&=&qz_1^2(z_2^2)^*,\\
(z_1^1)^*z_2^2&=&z_2^2(z_1^1)^*,&
(z_2^1)^*z_1^2&=&z_1^2(z_2^1)^*.
\end{array}
\end{eqnarray}

The irreducible representations of $Pol(Mat_{2})_q$ were classified
in \cite{T}.
Next theorem presents them in a different form that is convenient for our purpose.

Let
$C,\ S,\ d(q)\colon \ell^2(\mathbb{Z}_+)\rightarrow \ell^2(\mathbb{Z}_+)$
be operators  defined as follows
\[
Se_n=e_{n+1},\ C e_n =(1-q^{2n})^{\frac{1}{2}}e_n,\ d(q)e_n=q^n e_n.
\]

\begin{theorem}
Any irreducible bounded representation of $Pol(Mat_2)_q$ is unitarily equivalent to one of the following  non-equivalent representations:

\begin{enumerate}
\item the Fock representation acting in $\ell^2(\mathbb{Z}_+)^{\otimes 4}$:
\begin{align*}
\pi_F(z_2^2)&=CS\otimes 1\otimes 1\otimes 1, \\
\pi_F(z_2^1)& =d(q)\otimes CS\otimes 1\otimes 1,\\
\pi_F(z_1^2)& = d(q)\otimes 1\otimes CS\otimes 1,\\
\pi_F(z_1^1)&=1\otimes d(q)\otimes d(q)\otimes CS-
q^{-1}S^*C\otimes CS\otimes CS\otimes 1;
\end{align*}
\item representations
$\tau_{\varphi}$, $\varphi\in[0,2\pi)$, acting in $\ell^2(\mathbb{Z}_+)^{\otimes 3}$:
\begin{align*}
\tau_{\varphi}(z_2^2)&=CS\otimes 1\otimes 1,\\
\tau_{\varphi}(z_2^1)& =d(q)\otimes CS\otimes 1,\\
\tau_{\varphi}(z_1^2)& = d(q)\otimes 1\otimes CS,\\
\tau_{\varphi}(z_1^1)&=e^{i\varphi}1\otimes d(q)\otimes d(q)-
q^{-1}S^*C\otimes CS\otimes CS;
\end{align*}
\item    representations
$\nu_{1,\varphi}$
and $\nu_{2,\varphi}$ acting in $\ell^2(\mathbb{Z}_+)^{\otimes 2}$:
\begin{itemize}
\item[3a)]
\begin{align*}
\nu_{1,\varphi}(z_2^2)&=CS\otimes 1,\\
\nu_{1,\varphi}(z_2^1)& =e^{i\varphi}d(q)\otimes 1,\\
\nu_{1,\varphi}(z_1^2)& = d(q)\otimes CS,\\
\nu_{1,\varphi}(z_1^1)&=-e^{i\varphi}q^{-1}S^*C\otimes CS,
\end{align*}
\item[3b)]
\begin{align*}
\nu_{2,\varphi}(z_2^2)&=CS\otimes 1,\\
\nu_{2,\varphi}(z_2^1)& = d(q)\otimes CS,\\
\nu_{2,\varphi}(z_1^2)& =e^{i\varphi}d(q)\otimes 1,\\
\nu_{2,\varphi}(z_1^1)&=-e^{i\varphi}q^{-1}S^*C\otimes CS;
\end{align*}
\end{itemize}
\item  representations
$\rho_{\varphi_1,\varphi_2}$, $\varphi_1,\varphi_2\in[0,2\pi)$, acting in $\ell^2(\mathbb{Z}_+)$:
\begin{align*}
\rho_{\varphi_1,\varphi_{2}}(z_2^2)&=CS,\\
\rho_{\varphi_1,\varphi_2}(z_2^1)& =e^{i\varphi_1}d(q),\\
\rho_{\varphi_1,\varphi_2}(z_1^2)&=e^{i\varphi_2}d(q),\\
\rho_{\varphi_,\varphi_2}(z_1^1)&=
-e^{i(\varphi_1+\varphi_2)}q^{-1}S^*C;
\end{align*}
\item
 representations $\theta_{\varphi}$,
$\varphi\in [0,2\pi)$, acting in $l_2(\mathbb{Z}_+)$:
\[
\theta_{\varphi}(z_2^2)=e^{i\varphi},\ \theta_{\varphi}(z_2^1)=
\theta_{\varphi}(z_1^2)=0,\
\theta_{\varphi}(z_1^1)=q^{-1}CS;
\]
\item  one-dimensional representations $\gamma_{\varphi_1,\varphi_2}$, where
$\varphi_1,\varphi_2\in [0,2\pi)$
\[
\gamma_{\varphi_1,\varphi_2}(z_2^2)=e^{i\varphi_1},\ \gamma(z_2^1)=\gamma(z_1^2)=0,\
\gamma_{\varphi_1,\varphi_2}(z_1^1)=e^{i\varphi_2}q^{-1}.
\]
\end{enumerate}
\end{theorem}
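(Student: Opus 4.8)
The plan is to read this statement as a convenient reformulation of the classification already obtained in \cite{T}. Consequently I do not need to re-derive exhaustiveness from scratch: it suffices to (i) verify that each of the six displayed families really is a bounded $*$-representation, (ii) show that each is irreducible and that no two members of the list are unitarily equivalent, and (iii) match the invariants of these representations with the parameters of the list in \cite{T}, so that completeness of the present list follows from completeness of theirs. The organizing observation I would record first is that the three building blocks obey the identities $S^*S=I$, $SS^*=I-P_0$ with $P_0$ the projection onto $\mathbb C e_0$, $C^2+d(q)^2=I$, $[C,d(q)]=0$ and $d(q)S=qSd(q)$; moreover $CS=\pi_\varphi(t_{22})$, $S^*C=\pi_\varphi(t_{11})$ and $d(q)=e^{-i\varphi}\pi_\varphi(t_{21})$ for the infinite-dimensional representation \eqref{csu2} of $\mathbb C[SU_2]_q$. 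This dictionary lets me import the $SU_2$ relations \eqref{r2} and reduces every verification to manipulating these few identities leg by leg in the tensor products.

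For step (i) the holomorphic relations \eqref{ff1} and the mixed relations \eqref{ff3} factorize across the tensor legs and collapse using $d(q)S=qSd(q)$ together with $C^2+d(q)^2=I$. The genuinely delicate case is the first quadratic $*$-relation in \eqref{ff2}, the one for $z_1^1$: since $\pi_F(z_1^1)$ and $\tau_\varphi(z_1^1)$ are \emph{differences} of two tensor monomials, forming $(z_1^1)^*z_1^1$ and $z_1^1(z_1^1)^*$ produces four cross terms on each side, and one must check that they reassemble into the right-hand side with exactly the coefficient $q^{-2}(1-q^2)^2$ in front of $z_2^2(z_2^2)^*$. I would write this relation out in full and treat it as the computational crux. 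The remaining families $\nu_{i,\varphi},\rho_{\varphi_1,\varphi_2},\theta_\varphi,\gamma_{\varphi_1,\varphi_2}$ are obtained by specializing one (or more) Fock leg to a one-dimensional representation $z_1\mapsto e^{i\varphi}$ of $Pol(\mathbb C)_q$, so their relations follow from the same computation with $CS$ on that leg replaced by the scalar, and require no separate argument.

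For step (ii) I would prove irreducibility by the commutant method, exploiting the leg-by-leg structure. If $T$ commutes with the whole $*$-representation, then in particular it commutes with $z_2^2=CS\otimes 1\otimes 1\otimes 1$ and its adjoint; since the $q$-oscillator Fock representation $CS$ is irreducible and $W^*(CS,S^*C)=B(\ell^2(\mathbb Z_+))$, this forces $T=1\otimes T'$. Commuting successively with $z_2^1$, then $z_1^2$, and finally with the first summand $1\otimes d(q)\otimes d(q)\otimes CS$ of $z_1^1$ (the second summand acts on disjoint legs and drops out) peels off one leg at a time and forces $T$ to be scalar; the same argument works verbatim for the smaller tensor products. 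Non-equivalence I would read off unitary invariants of the generators: the families are separated by which of $z_2^1,z_1^2$ vanish or are scalar and whether $z_2^2$ is a scalar or a shift, and within a family the continuous phases are recovered as spectral data, e.g.\ $e^{i\varphi_1}$ as the common argument of the eigenvalues of $z_2^1=e^{i\varphi_1}d(q)$, and similarly for $\varphi_2$ and $\varphi$.

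Finally, step (iii) is bookkeeping: having exhibited an irreducible representation for every point of the parameter space used in \cite{T} and having shown these representations are mutually inequivalent, I conclude that the present list is complete exactly because the list of \cite{T} is, the two being in invariant-preserving bijection. The main obstacle throughout is the single relation for $z_1^1$ in the two top families: tracking the four cross terms and pinning down the coefficient $q^{-2}(1-q^2)^2$ is where an error is most likely to creep in, and it is the one step I would not relegate to a routine computation.
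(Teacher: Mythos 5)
First, a point of comparison: the paper does not actually prove this theorem. It is stated as a reformulation of the classification obtained in \cite{T}, with no argument supplied for why the list given here coincides with the list given there. So your proposal is not competing with a written proof; it is an outline of the verification the authors left implicit, and in spirit it takes the same route, namely deferring exhaustiveness to \cite{T} and checking everything else directly. Your steps (i) and (ii) are sound: the dictionary $CS=\pi_\varphi(t_{22})$, $S^*C=\pi_\varphi(t_{11})$, $d(q)=e^{-i\varphi}\pi_\varphi(t_{21})$ is correct, the identities $C^2+d(q)^2=I$ and $d(q)S=qSd(q)$ do reduce the relation checks to leg-by-leg computations, and the commutant argument for irreducibility works because the weighted shift $CS$ has trivial commutant, hence generates $B(\ell^2(\mathbb Z_+))$ as a von Neumann algebra, so that peeling off one tensor leg at a time is legitimate.

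The gap is in step (iii). From the facts that your six families are mutually inequivalent irreducibles and that they are indexed by the same parameter space as the complete list of \cite{T}, it does not follow that your list is complete: an injective family indexed by an infinite parameter set can map into a complete family indexed by the same set without being exhaustive. To transfer completeness you must actually identify each representation on the list of \cite{T} with a specific member of your list, i.e.\ produce an intertwining unitary (or invoke a uniqueness theorem) family by family, not merely observe that the parameter spaces match. The phrase ``invariant-preserving bijection'' only closes this if you verify both that the invariants you compute separate the equivalence classes of \cite{T}'s list and that every class there realizes one of your invariant values. The paper's own Lemma \ref{ber} illustrates the kind of argument that does the job: two representations are identified by exhibiting cyclic vectors with the same annihilation/coherence properties and appealing to uniqueness and irreducibility of coherent representations of the underlying Wick algebra. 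Either that device, or explicit unitaries, is needed to turn step (iii) from bookkeeping into a proof; the rest of your outline is fine.
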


It can be easily seen from the above list of representations that the $*$-algebra $Pol(Mat_{2})_q$ is  $*$-bounded
(see \cite{OS}), i.e., there exist constants $C(a)$, $a\in Pol(Mat_{2})_q$,
such that $||\pi(a)||\leq C(a)$ for any bounded $*$-representation $\pi$. Let $C(\text{Mat}_2)_q$ denote the universal enveloping $C^*$-algebra of $Pol(Mat_2)_q$.
The following theorem was proved first in \cite{pro_tur} and in general case for $Pol(Mat_n)_q$ in \cite{ssv}.
\begin{theorem}\label{th1}
Given an irreducible representation, $\pi$, of $C(Mat_2)_q$,
 let $\mathcal{A}_{\pi}$ be the
$C^*$-algebra generated by operators of the representation $\pi$.
Then there exists a homomorphism $\delta_{\pi}$ from the $C^*$-algebra
$\mathcal{A}_{\pi_F}$ to the $C^*$-algebra $\mathcal{A}_{\pi}$ such that
\[
\delta_{\pi}(\pi_{F}(z_i^j))=\pi(z_i^j),\quad i,\ j=1,2.
\]
Consequently, the Fock representation $\pi_F$ of $C(Mat_2)_q$ is faithful and $C(Mat_2)_q\simeq {\mathcal A}_{\pi_F}$.
\end{theorem}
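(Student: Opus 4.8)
The plan is to read the statement as the assertion that every irreducible representation factors through the Fock representation. A $*$-homomorphism $\delta_\pi$ with $\delta_\pi(\pi_F(z_i^j))=\pi(z_i^j)$ necessarily satisfies $\delta_\pi\circ\pi_F=\pi$ on the dense $*$-subalgebra $Pol(Mat_2)_q$, hence on all of $C(Mat_2)_q$ by continuity, so its existence is equivalent to the inclusion $\ker\pi_F\subseteq\ker\pi$. Since the irreducible representations of a $C^*$-algebra separate its points (the reduced atomic representation is faithful), once such a $\delta_\pi$ is produced for every irreducible $\pi$ one gets $\ker\pi_F\subseteq\bigcap_\pi\ker\pi=\{0\}$, which is precisely the faithfulness of $\pi_F$ together with the isomorphism $C(Mat_2)_q\simeq\mathcal A_{\pi_F}$. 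Thus the whole theorem reduces to exhibiting $\delta_\pi$ for each of the six families in the classification theorem (the Fock case itself being trivial, $\delta_{\pi_F}=\mathrm{id}$).

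The key structural observation I would record first concerns the algebra $\mathcal T=C^*(CS)\subseteq B(\ell^2(\mathbb Z_+))$. The weighted shift $CS$ is a compact perturbation of the unilateral shift, so $\mathcal T$ is the Toeplitz algebra: it contains the compacts $\mathcal K$ and $\mathcal T/\mathcal K\simeq C(\mathbb T)$. All three building blocks of the Fock formulas live in this single algebra: $S^*C=(CS)^*\in\mathcal T$, while $1-(CS)^*CS=q^2 d(q)^2\ge 0$ yields $d(q)=q^{-1}(1-(CS)^*CS)^{1/2}\in\mathcal T$ by continuous functional calculus. Consequently each $\pi_F(z_i^j)$ lies in $\mathcal T^{\otimes 4}$, so $\mathcal A_{\pi_F}\subseteq\mathcal T^{\otimes 4}$. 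The characters of $\mathcal T$ are exactly the evaluations $\mathrm{ev}_\varphi\colon\mathcal T\to\mathbb C$ factoring through $\mathcal T/\mathcal K=C(\mathbb T)$; they satisfy $\mathrm{ev}_\varphi(CS)=e^{i\varphi}$, $\mathrm{ev}_\varphi(S^*C)=e^{-i\varphi}$ and, decisively, $\mathrm{ev}_\varphi(d(q))=0$.

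For each irreducible $\pi$ I would take $\delta_\pi$ to be the restriction to $\mathcal A_{\pi_F}$ of a tensor product $\phi_1\otimes\phi_2\otimes\phi_3\otimes\phi_4$, where each $\phi_k$ is either $\mathrm{id}_{\mathcal T}$ or a character $\mathrm{ev}_{\psi_k}$; by nuclearity of $\mathcal T$ this is a genuine $*$-homomorphism of $\mathcal T^{\otimes 4}$, whose image on $\mathcal A_{\pi_F}$ is the $C^*$-algebra generated by the $\phi_\bullet(\pi_F(z_i^j))$. A direct check against the classification fixes the factors: for $\tau_\varphi$ collapse the fourth factor by $\mathrm{ev}_\varphi$; for $\nu_{1,\varphi}$ collapse the second and fourth by $\mathrm{ev}_\varphi$; for $\nu_{2,\varphi}$ collapse the third and fourth by $\mathrm{ev}_\varphi$; for $\rho_{\varphi_1,\varphi_2}$ collapse the last three by $\mathrm{ev}_{\varphi_1},\mathrm{ev}_{\varphi_2},\mathrm{ev}_0$; and for the one-dimensional $\gamma_{\varphi_1,\varphi_2}$ collapse all four, solving $e^{i(\psi_2+\psi_3-\psi_1+\pi)}=e^{i\varphi_2}$ with $\psi_1=\varphi_1$ for the remaining phases. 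In each of these the identity $\mathrm{ev}(d(q))=0$ kills the first summand of $\pi_F(z_1^1)$, leaving exactly $\pi(z_1^1)$ with the correct phase and sign. The representation $\theta_\varphi$ is exceptional: here I would instead keep the second factor and collapse factors $1,3,4$ by $\mathrm{ev}_\varphi\otimes\mathrm{id}\otimes\mathrm{ev}_{\varphi+\pi}\otimes\mathrm{ev}_0$, the shift by $\pi$ in the third slot turning $-q^{-1}e^{i(\psi_3-\psi_1)}CS$ into $\theta_\varphi(z_1^1)=q^{-1}CS$.

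I expect the conceptual crux to be the recognition that $d(q)$ and $S^*C$ already belong to the single Toeplitz algebra generated by $CS$, so that every degeneration of $\pi_F$ arises merely by inserting Toeplitz characters into tensor slots; granting this, the six verifications are routine arithmetic. The one delicate case is $\theta_\varphi$, where the surviving Toeplitz factor must be taken from the second summand of $\pi_F(z_1^1)$ rather than from the first, and the sign must be repaired by a $\pi$-phase shift — this is the step most likely to trip up the bookkeeping.
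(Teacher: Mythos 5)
Your proposal is correct, but note that the paper itself offers no proof of this theorem: it is quoted from \cite{pro_tur} (and from \cite{ssv} for general $n$), so there is no in-paper argument to match yours against. What you supply is a legitimate, self-contained proof given the classification of irreducible representations stated in Section~\ref{secpolm2q}. The reduction (existence of all $\delta_\pi$ forces $\ker\pi_F\subseteq\bigcap_\pi\ker\pi=\{0\}$, hence faithfulness) is sound, and the structural core is right: $C^*(CS)$ is the Toeplitz algebra, $1-(CS)^*CS=q^2d(q)^2$ puts $d(q)$ (a compact operator) and then $C$, $S$, $S^*C$ inside it, and its characters send $CS\mapsto e^{i\varphi}$, $d(q)\mapsto 0$ --- this is precisely the map $\Theta_\varphi$ that the paper itself constructs later in the proof of Lemma~\ref{inequality}, so your mechanism is very much in the spirit of the rest of the article. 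I checked all six slot assignments and phases against the classification: the collapse of the fourth slot for $\tau_\varphi$, of slots $\{2,4\}$ and $\{3,4\}$ for $\nu_{1,\varphi}$ and $\nu_{2,\varphi}$, of slots $\{2,3,4\}$ for $\rho_{\varphi_1,\varphi_2}$, the constraint $e^{i(\psi_2+\psi_3-\psi_1+\pi)}=e^{i\varphi_2}$ for $\gamma_{\varphi_1,\varphi_2}$, and the sign repair via the $\pi$-shift in the third slot for $\theta_\varphi$ all come out exactly as the formulas require. The trade-off relative to the cited proofs is worth recording: your argument is elementary but leans entirely on having the complete list of irreducibles, which (as the introduction notes) is available only for $\min(m,n)=1$ or $m=n=2$, whereas the proof in \cite{ssv} must and does avoid such a case check in order to cover all $Pol(Mat_n)_q$.
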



In what follows we  will use another description of irreducible representations. For this we need the following $*$-homomorphisms whose existence was indicated in \cite{bershtein} without a proof.


\begin{lemma} The map
$$ \mathcal D:z_j^i\mapsto\sum_{a,b=1}^2 z_b^a\otimes t_{bj}\otimes t_{ai}, i,j=1,2,$$
is uniquely extendable to a $*$-homomorphism  $$\mathcal D:Pol(\text{Mat}_2)_q\to Pol(\text{Mat}_2)_q\otimes\mathbb C[SU_2]_q\otimes \mathbb C[SU_2]_q.$$
\end{lemma}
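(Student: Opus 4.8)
The plan is to view $\mathcal D$ as the $*$-homomorphism from the free $*$-algebra on the symbols $z_j^i$ that is determined by the stated formulas, and to prove that it kills every defining relation of $Pol(\text{Mat}_2)_q$; since the $z_j^i$ generate the algebra, this yields both existence and uniqueness of the extension at once. Throughout I use that the three tensor factors of the target commute, so that $\mathcal D(z_j^i)\mathcal D(z_l^k)=\sum_{a,b,c,d}z_b^a z_d^c\otimes t_{bj}t_{dl}\otimes t_{ai}t_{ck}$, and that the $*$-homomorphism property forces $\mathcal D((z_j^i)^*)=\mathcal D(z_j^i)^*=\sum_{a,b}(z_b^a)^*\otimes t_{bj}^*\otimes t_{ai}^*$.

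First I would treat the holomorphic relations (\ref{ff1}). Writing $M=\bigl(\begin{smallmatrix}z_1^1&z_2^1\\ z_1^2&z_2^2\end{smallmatrix}\bigr)$, one checks directly that (\ref{ff1}) is precisely the presentation of the algebra of quantum $2\times2$ matrices, and that $\mathcal D$ sends the entry $M_{ij}=z_j^i$ to the $(i,j)$ entry of the two-sided transform $T^{(2)\,t}M\,T^{(1)}$, where $T^{(1)},T^{(2)}$ are the fundamental matrices of the two copies of $\mathbb C[SU_2]_q$. For $2\times2$ matrices the transpose of a solution of (\ref{r2}) again solves (\ref{r2}), and the two copies lie in different, mutually commuting tensor factors that also commute with $M$; hence the standard covariance of the quantum-matrix relations under left and right multiplication by such matrices shows that (\ref{ff1}) maps to $0$, the verification reducing to the quadratic identities among the $t_{ij}$ contained in (\ref{r2}).

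The substantial part is the system (\ref{ff2})--(\ref{ff3}), which couples the $z$'s to the $z^*$'s and therefore cannot follow from the commutation relations (\ref{r2}) alone: it needs the Hopf $*$-structure of $\mathbb C[SU_2]_q$, that is, the unitarity of the fundamental corepresentation. Concretely, $t_{11}^*=t_{22}$ and $t_{12}^*=-qt_{21}$ yield in each copy the orthogonality relations $\sum_k t_{ik}t_{jk}^*=\delta_{ij}$ and $\sum_k t_{ki}^*t_{kj}=\delta_{ij}$, reflecting the quantum determinant relation $t_{11}t_{22}-qt_{12}t_{21}=1$ of (\ref{r2}). I would substitute $\mathcal D((z_j^i)^*)$ and $\mathcal D(z_l^k)$ into each relation of (\ref{ff2})--(\ref{ff3}), collect the resulting four-fold sums $\sum_{a,b,c,d}(z_b^a)^*z_d^c\otimes t_{bj}^*t_{dl}\otimes t_{ai}^*t_{ck}$, and reorganize them using, on the $Pol$-factor, the relations (\ref{ff2})--(\ref{ff3}) already satisfied by the $(z_b^a)^*$ and $z_d^c$, and, on the two $\mathbb C[SU_2]_q$-factors, (\ref{r2}) together with the orthogonality relations.

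The hard part will be the coefficient bookkeeping: one must check that the inhomogeneous terms on the right of (\ref{ff2}) and (\ref{ff3}) (the $1-q^2$, the $-(1-q^2)$, the $q^{-2}(1-q^2)^2$, and the $(q-q^{-1})$-terms) reassemble exactly, the scalar contributions being produced by the collapsing sums $\sum_k t_{ik}t_{jk}^*=\delta_{ij}$. This is where the specific normalizations in the presentation of $Pol(\text{Mat}_2)_q$ are forced. The cleanest way to organize it, which I would adopt, is to recast (\ref{ff2})--(\ref{ff3}) as a single matrix identity relating $M$ and $M^*$ (a reflection-type equation) and to verify that this identity is covariant under $M\mapsto T^{(2)\,t}M\,T^{(1)}$; covariance then collapses to the two unitarity relations $T^{(r)}(T^{(r)})^*=(T^{(r)})^*T^{(r)}=I$, $r=1,2$, which are exactly the orthogonality relations above. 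Once (\ref{ff1})--(\ref{ff3}) are all seen to map to $0$, $\mathcal D$ descends to the asserted $*$-homomorphism, and uniqueness is immediate from generation by the $z_j^i$.
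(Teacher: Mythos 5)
Your overall strategy --- check that $\mathcal D$, defined on the free $*$-algebra, annihilates every defining relation of $Pol(\text{Mat}_2)_q$ --- is legitimate and is genuinely different from the paper's route: the paper never verifies the relations directly, but instead factors $\mathcal D$ as $(\text{id}\otimes i)\circ\tilde\Delta\circ\mathcal I$ through the Hopf algebra $\mathbb C[SL_4]_q$, importing the homomorphism property of $\mathcal I$ and of $\tilde\Delta$ from cited results of Shklyarov--Sinelshchikov--Vaksman, so that the only computation left is matching the composite with $\mathcal D$ on generators. Your treatment of the holomorphic relations (\ref{ff1}) is essentially complete: these are indeed the quantum $2\times 2$ matrix relations, which are stable under two-sided multiplication by commuting quantum matrices (and under transposition, since the commutation relations in (\ref{r2}) are symmetric in $t_{12}\leftrightarrow t_{21}$), so this half reduces to the standard bialgebra property of $M_q(2)$.

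The gap is in the mixed relations (\ref{ff2})--(\ref{ff3}), which are the entire substance of the lemma, and which your proposal handles only as a declared plan. You assert that these ten relations ``recast as a single matrix identity'' whose covariance under $M\mapsto T^{(2)\,t}MT^{(1)}$ ``collapses to the two unitarity relations,'' but you neither write down that identity nor verify the collapse; and the claim as stated is too strong. After substituting $\mathcal D((z_j^i)^*)\mathcal D(z_l^k)$ you must first reorder $(z_b^a)^*z_d^c$ into $z\,z^*$-form using (\ref{ff2})--(\ref{ff3}) in the first tensor factor, which produces coefficient tensors (the $R$-matrix data implicit in the $(q-q^{-1})$- and $q^{-2}(1-q^2)^2$-terms) that must then be contracted against products $t^*_{\,\cdot\,}t_{\,\cdot\,}$ in the other two factors; only the fully contracted scalar terms are disposed of by $\sum_k t_{ki}^*t_{kj}=\delta_{ij}$, while the surviving quadratic terms require the commutation relations (\ref{r2}) in an essential, relation-by-relation way. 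Since the normalizations in (\ref{ff2}) are exactly what make this bookkeeping close up, skipping it means the lemma is not actually proved --- indeed, avoiding precisely this computation is why the paper routes the argument through $\mathbb C[SL_4]_q$ and the results of \cite{ssv-dif}. To complete your argument you would need either to carry out the contraction for each of the ten relations, or to exhibit explicitly the reflection-equation form of (\ref{ff2})--(\ref{ff3}) together with a proof of its two-sided covariance.
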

\begin{proof}
Consider the Hopf algebra $\mathbb C[SL_4]_q$ generated by $\{t_{ij}\}_{i,j=1}^4$ and the commutation relations
\begin{eqnarray*}
t_{ij}t_{kl}-qt_{kl}t_{ij}=0, \quad i=k\  \& \ j<l\text{ or } i<k\  \& \ j=l,\\
t_{ij}t_{kl}-t_{kl}t_{ij}=0\quad i<k\  \& \ j>l,\\
t_{ij}t_{kl}-t_{kl}t_{ij}=(q-q^{-1})t_{il}t_{kj} \quad i<k\  \& \ j<l,\\
\text{det}_qT:=\sum_{s\in S_4}(-q)^{l(s)}t_{1s(1)}t_{2s(2)}t_{3s(3)}t_{4s(4)}=1,
\end{eqnarray*}
with $l(s)=\text{card}\{(i,j):i<j\& s(i)>s(j)\}$.
 The comultiplication $\Delta$ is given by
$$\Delta(t_{ij})=\sum_{k=1}^2 t_{ik}\otimes t_{kj}.$$
Let $t=t_{\{1,2\}\{3,4\}}:=t_{13}t_{24}-qt_{24}t_{23}$.
Consider the map
$$\mathcal I: z_a^\alpha\mapsto t^{-1}t_{\{1,2\}J_{a\alpha}}.$$
where $J_{a\alpha}=\{3,4\}\setminus\{5-\alpha\}\cup\{a\}$ and $t_{IJ}:=t_{i_1j_1}t_{i_2j_2}-qt_{i_1,j_2}t_{i_2j_1}$ with $I=\{i_1,i_2\}$, $J=\{j_1,j_2\}$.
By \cite[Proposition 6.10]{ssv-dif}, it determines a homomorphism from $Pol(Mat_2)_q$ to a localisation of $Pol(\tilde X)_q:=(\mathbb C[SL_4]_q,*)$ (see \cite[(6.6)]{ssv-dif} for the involution $*$ on $\mathbb C[SL_4]_q$) with respect to a multiplicative system $tt^*$, $(tt^*)^2,\ldots$.

Consider now the two-sided ideal $J\subset\mathbb C[SL_4]_q$ generated by $t_{kl}$ with $k\leq 2$ and $l>2$ or $k>2$ and $l\leq 2$, and the canonical onto morphism
$$j:\mathbb C[SL_4]_q\to\mathbb C[SL_4]_q/J.$$
Let
$\mathbb C[S(U_2\times U_2)]_q= (\mathbb C[SL_4]_q/J,\star)$, an involutive algebra,  where
$$t_{ij}^\star=(-q)^{j-i}\text{det}_qT_{ij}$$
and $T_{ij}$ is derived from $(t_{ij})_{i,j=1}^4$ by deleting its $i$-th row and $j$-th column and
 $\text{det}_q T$ is the quantum determinant of $T$ (see \cite{ssv-dif} for the definition).
By \cite[Lemma 9.3]{ssv-dif} the composition $\tilde\Delta=(\text{id}\otimes j)\Delta$ is a homomorphsim of $*$-algebras $\tilde\Delta:Pol(\tilde X)_q\to Pol(\tilde X)_q\otimes\mathbb C[S(U_2\times U_2)]_q$. The $*$-homomorphism can be naturally extended to the localization of $Pol(\tilde X)_q$ which we shall also denote by $\tilde\Delta$.

Let $I$ be the two-sided ideal of $\mathbb C[S(U_2\times U_2)]_q$ generated by $t_{11}t_{22}-qt_{12}t_{21}-1$ and $t_{33}t_{44}-qt_{34}t_{43}-1$  (we note that
$1=\text{det}_q(t_{ij})=(t_{11}t_{22}-qt_{12}t_{21})(t_{33}t_{44}-qt_{34}t_{43})$ in $\mathbb C[S(U_2\times U_2)]_q$). Then $\mathbb C[SU_2]_q\otimes\mathbb C[SU_2]_q=\mathbb C[S(U_2\times U_2)]_q/I$.

  Let $$i:\mathbb C[S(U_2\times U_2)]_q\mapsto \mathbb C[SU_2]_q\otimes\mathbb C[SU_2]_q$$ be the canonical onto homomorphism.

Then $(\text{id}\otimes i)\circ\tilde\Delta\circ\mathcal I$ is a $*$-homomorphism from $Pol((Mat_2)_q$ to $Pol(Mat_2)_q\otimes\mathbb C[SU_2]_q\otimes \mathbb C[SU_2]_q$. To prove the lemma   it is enough to see now that
$\mathcal D=(\text{id}\otimes i)\circ\tilde\Delta\circ\mathcal I$.

Using relations in $\mathbb[SL_4]_q/J$ we obtain
\begin{eqnarray*}
\tilde\Delta(t)&=&\tilde\Delta(t_{13}t_{24}-qt_{14}t_{23})=(\sum_{k=3}^4t_{1k}\otimes t_{k3})(\sum_{i=3}^4t_{2i}\otimes t_{i4})-q(\sum_{k=3}^4t_{1k}\otimes t_{k4})(\sum_{i=3}^4t_{2i}\otimes t_{i3})\\
&=&\sum_{k,i=3}^4t_{1k}t_{2i}\otimes(t_{k3}t_{i4}-qt_{k4}t_{i3})=t_{13}t_{24}\otimes(t_{33}t_{44}-qt_{34}t_{43})\\
&+&t_{14}t_{23}\otimes(t_{43}t_{34}-qt_{44}t_{33})\\
&=&t_{13}t_{24}\otimes(t_{33}t_{44}-qt_{34}t_{43})-q^{-1}t_{14}t_{23}\otimes(t_{44}t_{33}-q^{-1}t_{43}t_{34})\\
&=&(t_{13}t_{24}-q^{-1}t_{14}t_{23})\otimes(t_{33}t_{44}-qt_{34}t_{43})=t\otimes(t_{33}t_{44}-qt_{34}t_{43})
\end{eqnarray*}
and hence $(\text{id}\otimes i)\circ\tilde \Delta(t)=t\otimes 1\otimes 1$.

Similarly,
\begin{eqnarray*}
\tilde\Delta(t_{\{12\}J_{11}})&=&\tilde\Delta(t_{11}t_{23}-qt_{13}t_{21})=(\sum_{k=1}^2t_{1k}\otimes t_{k1})(\sum_{i=3}^4t_{2i}\otimes t_{i3})\\
&-&q(\sum_{k=3}^4t_{1k}\otimes t_{k3})(\sum_{i=1}^2t_{2i}\otimes t_{i1})\\
&=&\sum_{k=1}^2\sum_{i=3}^4t_{1k}t_{2i}\otimes t_{k1}t_{i3}-q\sum_{i=3}^4\sum_{k=1}^2t_{1i}t_{2k}\otimes t_{i3}t_{k1}\\
&=&\sum_{k=1}^2\sum_{i=3}^4(t_{1k}t_{2i}-qt_{1i}t_{2k})\otimes t_{k1}t_{i3}
\end{eqnarray*}

and

\begin{eqnarray*}
(\text{id}\otimes i)\circ\tilde\Delta(t^{-1}t_{\{12\}J_{11}})&=&\sum_{k=1}^2\sum_{i=3}^4(t^{-1}(t_{1k}t_{2i}-qt_{1i}t_{2k}))\otimes t_{k1}\otimes t_{(i-2)1}\\
&=&\sum_{k=1}^2\sum_{i=3}^4 \mathcal I(z_k^{i-2})\otimes t_{k1}\otimes t_{(i-2)1}
\end{eqnarray*}
giving $(\text{id}\otimes i)\circ\tilde\Delta\circ \mathcal I(z_1^1)=\mathcal D(z_1^1)$. Similarly one checks that $(\text{id}\otimes i)\circ\tilde\Delta\circ \mathcal I(z_i^j)=\mathcal D(z_i^j)$ for other generators $z_i^j$.

\end{proof}

 Consider now a mapping
$\Pi_\varphi:Pol(Mat_2)_q\to Pol(\mathbb C)_q$  given on the generators  by $$\left(\begin{array}{cc}\Pi_\varphi(z_1^1)&\Pi_\varphi(z_1^2)\\\Pi_\varphi(z_2^1)&\Pi_\varphi(z_2^2)\end{array}\right)\to\left(\begin{array}{cc}q^{-1}z&0\\0&e^{i\varphi}\end{array}\right).$$
It is straight forward to check that $\Pi_\varphi$ is a $*$-homomorphism.

Clearly, if $\rho$ is a $*$-representation of $Pol(\mathbb C)_q$, $\tau$ is a $*$-representation of $Pol(Mat_2)_q$, and $\pi_1$, $\pi_2$ are representation of
$\mathbb C[SU_2]_q$ then $\rho\circ\Pi_\varphi$ and  $(\tau\otimes\pi_1\otimes\pi_2)\circ\mathcal D$  are $*$-representations of $Pol(Mat_2)_q$.

Let $\rho_F$ be the Fock representation of $Pol(\mathbb C)_q$ , $\rho_\varphi$, $\varphi\in[0,2\pi)$,  be the one-dimensional representations of $Pol(\mathbb C)_q$ and $\pi_\varphi$, $\varphi\in [0,2\pi)$  be the infinite-dimensional representation of $\mathbb C[SU_2]_q$  given by (\ref{csu2}). Consider the following families of $*$-representations of $Pol(Mat_2)_q$:
\[
\mathcal F_\varphi=\rho_F\circ \Pi_\varphi,\quad \chi_{\varphi_1,\varphi_2}=\rho_{\varphi_1}\circ\Pi_{\varphi_2}
\]
and
$$(\mathcal F_\varphi\otimes\pi_0\otimes\pi_0)\circ\mathcal D, \quad (\chi_{\varphi_1,\varphi_2}\otimes\pi_0\otimes\pi_0)\circ\mathcal D,$$
where $\varphi,\varphi_1,\varphi_2\in[0,2\pi)$.

We have
\begin{eqnarray*}
(\mathcal F_\varphi\otimes\pi_0\otimes\pi_0)\circ\mathcal D(z_1^1)=(\mathcal F_\varphi\otimes\pi_0\otimes\pi_0)(\sum z_b^a\otimes t_{b1}\otimes t_{a1})\\=
q^{-1}\rho_F(z)\otimes\pi_0(t_{11})\otimes\pi_0(t_{11})+e^{i\varphi}\otimes\pi_0(t_{21})\otimes\pi_0(t_{21})\\=
q^{-1}CS\otimes S^*C\otimes S^*C+e^{i\varphi}\otimes d(q)\otimes d(q),\\ \\
(\mathcal F_\varphi\otimes\pi_0\otimes\pi_0)\circ\mathcal D(z_2^2)=(\mathcal F_\varphi\otimes\pi_0\otimes\pi_0)(\sum z_b^a\otimes t_{b2}\otimes t_{a2})\\=
q^{-1}\rho_F(z)\otimes\pi_0(t_{12})\otimes\pi_0(t_{12})+e^{i\varphi}\otimes\pi_0(t_{22})\otimes\pi_0(t_{22})\\=
qCS\otimes d(q)\otimes d(q)+e^{i\varphi}\otimes CS\otimes CS,\\ \\
(\mathcal F_\varphi\otimes\pi_0\otimes\pi_0)\circ\mathcal D(z_1^2)=(\mathcal F_\varphi\otimes\pi_0\otimes\pi_0)(\sum z_b^a\otimes t_{b1}\otimes t_{a2})\\=
q^{-1}\rho_F(z)\otimes\pi_0(t_{11})\otimes\pi_0(t_{12})+e^{i\varphi}\otimes\pi_0(t_{21})\otimes\pi_0(t_{22})\\=
-CS\otimes S^*C\otimes d(q)+e^{i\varphi}\otimes d(q)\otimes CS,\\ \\
(\mathcal F_\varphi\otimes\pi_0\otimes\pi_0)\circ\mathcal D(z_2^1)=(\mathcal F_\varphi\otimes\pi_0\otimes\pi_0)(\sum z_b^a\otimes t_{b2}\otimes t_{a1})\\=
q^{-1}\rho_F(z)\otimes\pi_0(t_{12})\otimes\pi_0(t_{11})+e^{i\varphi}\otimes\pi_0(t_{22})\otimes\pi_0(t_{21})\\=
-CS\otimes d(q)\otimes S^*C+e^{i\varphi}\otimes CS\otimes d(q),
\end{eqnarray*}
  and
\begin{eqnarray*}
(\chi_{\varphi_1,\varphi_2}\otimes\pi_0\otimes\pi_0)\circ\mathcal D(z_1^1)&=&q^{-1}e^{i\varphi_1} S^*C\otimes S^*C+e^{i\varphi_2} d(q)\otimes d(q),\\
(\chi_{\varphi_1,\varphi_2}\otimes\pi_0\otimes\pi_0)\circ\mathcal D(z_2^2)&=&qe^{i\varphi_1}d(q)\otimes d(q)+e^{i\varphi_2} CS\otimes CS,\\
(\chi_{\varphi_1,\varphi_2}\otimes\pi_0\otimes\pi_0)\circ\mathcal D(z_1^2)&=&-e^{i\varphi_1} S^*C\otimes d(q)+e^{i\varphi_2}d(q)\otimes CS,\\
(\chi_{\varphi_1,\varphi_2}\otimes\pi_0\otimes\pi_0)\circ\mathcal D(z_2^1)&=&-e^{i\varphi_1}d(q)\otimes S^*C+e^{i\varphi_2} CS\otimes d(q).
\end{eqnarray*}

\begin{lemma}\label{ber}
The $*$-representation $(\mathcal F_\varphi\otimes\pi_0\otimes\pi_0)\circ\mathcal D$, $\varphi\in[0,2\pi)$, is irreducible and unitarily equivalent to $\tau_\varphi$.
\end{lemma}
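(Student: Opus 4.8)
The plan is to realise both representations on the same Hilbert space $H=\ell^2(\mathbb Z_+)^{\otimes3}$, with the same distinguished cyclic vector $\Omega=e_0\otimes e_0\otimes e_0$, to show that they induce the same vacuum state, and then to invoke uniqueness of the GNS construction. Write $\sigma_\varphi=(\mathcal F_\varphi\otimes\pi_0\otimes\pi_0)\circ\mathcal D$ for the representation in question.

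First I would introduce the diagonal operator $G$ with $G(e_k\otimes e_m\otimes e_n)=(2k+m+n)e_k\otimes e_m\otimes e_n$. Using the explicit formulas for $\tau_\varphi$ together with the ones computed above for $\sigma_\varphi$, a direct inspection shows that in both representations every generator is $G$-homogeneous, with $\deg z_2^2=2$, $\deg z_2^1=\deg z_1^2=1$ and $\deg z_1^1=0$ (and the adjoints of opposite degree). Hence $H=\bigoplus_{d\ge0}H_d$ decomposes into the same finite-dimensional graded subspaces $H_d=\mathrm{span}\{e_k\otimes e_m\otimes e_n:2k+m+n=d\}$ in both cases, and $\Omega$ is the unique vector of degree $0$ up to a scalar. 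Next I would record the vacuum relations: a short computation using $Ce_0=0$ and $d(q)e_0=e_0$ shows that for \emph{both} $\pi=\tau_\varphi$ and $\pi=\sigma_\varphi$ one has $\pi((z_2^2)^*)\Omega=\pi((z_2^1)^*)\Omega=\pi((z_1^2)^*)\Omega=0$ together with $\pi(z_1^1)\Omega=e^{i\varphi}\Omega$ and $\pi((z_1^1)^*)\Omega=e^{-i\varphi}\Omega$.

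The key point is that the vacuum state $\omega_\pi(a)=\langle\pi(a)\Omega,\Omega\rangle$ is then forced to be the same functional for the two representations. Indeed, every $a\in Pol(Mat_2)_q$ is a combination of words in the generators and their adjoints, and by normal ordering such a word, by means of the relations (\ref{ff1})--(\ref{ff3}), into the form (creators)$\cdot$(powers of $z_1^1,(z_1^1)^*$)$\cdot$(annihilators), one evaluates $\omega_\pi$ on each normally ordered monomial using only the vacuum relations and the grading: the rightmost annihilators kill $\Omega$ unless absent, the middle factor contributes a phase $e^{i\ell\varphi}$, and any nontrivial product of creators sends $\Omega$ into $H_{\ge1}$, which is orthogonal to $\Omega$. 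Since both the commutation relations and the action on $\Omega$ are identical in the two cases, this computation yields identical numbers, so $\omega_{\tau_\varphi}=\omega_{\sigma_\varphi}$.

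Finally I would check that $\Omega$ is cyclic for $\tau_\varphi$: applying $\tau_\varphi(z_2^2)$, $\tau_\varphi(z_2^1)$ and $\tau_\varphi(z_1^2)$ repeatedly to $\Omega$ reaches every basis vector $e_k\otimes e_m\otimes e_n$, because the weights of $CS$ are nonzero. Applying uniqueness of the GNS representation to the cyclic representation $\tau_\varphi$ and to the cyclic subrepresentation of $\sigma_\varphi$ generated by $\Omega$ produces a grading-preserving unitary $U$ with $U\Omega=\Omega$ intertwining $\tau_\varphi$ with $\sigma_\varphi|_{H_0}$, where $H_0=\overline{\sigma_\varphi(Pol(Mat_2)_q)\Omega}$. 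As $U$ preserves degree we get $\dim (H_0)_d=\dim H_d$ for every $d$, and since $(H_0)_d\subseteq H_d$ is finite-dimensional this forces $H_0=H$; thus $\sigma_\varphi\cong\tau_\varphi$, and $\sigma_\varphi$ is irreducible because $\tau_\varphi$ is irreducible by Theorem 1. The main obstacle I anticipate is the bookkeeping in the normal-ordering step: one must verify that reordering an arbitrary word past the creators by means of (\ref{ff2})--(\ref{ff3}) terminates, and that the grading genuinely annihilates every term except the fully reduced one. This is precisely where the homogeneity established in the first step does the essential work.
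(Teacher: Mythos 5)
Your argument is correct, and its core coincides with the paper's: both representations are realised on $\ell^2(\mathbb Z_+)^{\otimes 3}$ with the same vector $\Omega=e_0\otimes e_0\otimes e_0$ satisfying the same coherent-state relations ($(z_2^2)^*$, $(z_2^1)^*$, $(z_1^2)^*$ annihilate $\Omega$ and $(z_1^1)^*\Omega=e^{-i\varphi}\Omega$), and the representation is pinned down by this data. Where the paper stops at that point and invokes \cite[Proposition 1.3.3]{jsw} --- uniqueness and irreducibility of the coherent representation of a Wick algebra with a prescribed coherent state --- you unpack the citation: your normal-ordering computation showing that the two vacuum states coincide, followed by GNS uniqueness, is exactly the content of that proposition. (Your description of the normal form is slightly loose, since $z_1^1$ and $(z_1^1)^*$ do not commute with each other; the standard form with all unstarred generators to the left of all starred ones gives the same conclusion, because a string of starred generators applied to $\Omega$ is a scalar multiple of $\Omega$ or zero.) Two sub-steps are handled by a genuinely different mechanism, to your advantage in self-containedness. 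The paper needs, and only asserts, that $\Omega$ is cyclic for the family $W_i^j=(\mathcal F_\varphi\otimes\pi_0\otimes\pi_0)\circ\mathcal D(z_i^j)$, which is less transparent than for $\tau_\varphi$ since each $W_i^j$ is a sum of two tensor-product terms; your grading $G$, with $\deg z_2^2=2$, $\deg z_2^1=\deg z_1^2=1$, $\deg z_1^1=0$ (which one checks does hold for all eight generators in both realisations, the defining relations being homogeneous for this assignment), lets you verify cyclicity only for $\tau_\varphi$ and recover it for the $W$-family from the dimension count in the finite-dimensional pieces $H_d$. Finally, you inherit irreducibility from that of $\tau_\varphi$ in the classification theorem, whereas the paper gets it for free from the coherent-representation theory; both are legitimate.
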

\begin{proof}
Fix $\varphi\in[0,2\pi)$.
Let $Z_i^j=\tau_{\varphi}(z_i^j)$ and
$W_i^j=(\mathcal{F}_{\varphi}\otimes\pi_0\otimes\pi_0)\circ\mathcal{D}
(z_i^j)$, $i,j=1,2$. The operators act on $\ell^2(\mathbb Z_+)^{\otimes 3}$.
Let $\Omega=e_0\otimes e_0\otimes e_0$. It can be easily verified that
$\Omega$ is cyclic for both of the families $\{Z_i^j,\ (Z_i^j)^*,\ i,j=1,2\}$,
$\{W_i^j,\ (W_i^j)^*,\ i,j=1,2\}$, and
\begin{gather*}
(Z_2^2)^* \Omega=(W_2^2)^* \Omega=0,\quad (Z_2^1)^* \Omega=(W_2^1)^*
\Omega=0,\\
(Z_1^2)^* \Omega=(W_1^2)^* \Omega=0,\quad
(Z_1^1)^* \Omega=(W_1^1)^* \Omega= e^{-i\phi} \Omega.
\end{gather*}
Hence both $\tau_{\varphi}$ and
$(\mathcal{F}_{\varphi}\otimes\pi_0\otimes\pi_0)\circ\mathcal{D}$
determine so-called {\bf coherent representations} of the Wick algebra corresponding to
$Pol(Mat_{2})_q$, with the same coherent state (see \cite{jsw} for definition and properties of coherent
representation of $*$-algebra allowing Wick ordering). Since coherent
representation of Wick algebra is unique, up to the unitary equivalence,
and irreducible (see \cite[Proposiiton 1.3.3]{jsw}), we have the required statement.




\end{proof}

\section{Shilov boundary}

Let $E_1$ and $E_2$  be subspaces of $C^*$-algebras $\mathcal A_1$ and $\mathcal A_2$ respectively. We denote by $M_n(E_i)$ be the space of all $n\times n$ matrices with entries in $E_i$. We equip $M_n(E_i)$ with norms induced from the $C^*$-algebras $M_n(\mathcal A_i)$. Note that the norms are independent of the embeddings of $E_i$ into a $C^*$-algebra.
 Let $T:E_1\to E_2$ be a linear operator. Denote by $T^{(n)}$ the mapping from $M_n(E_1)$ to $M_n(E_2)$ defined by
  $$T^{(n)}((a_{ij})_{i,j})=(T(a_{ij})_{i,j}), (a_{ij})_{i,j}\in M_n(E_1).$$ $T$ is called contractive if $||T||\leq 1$ and completely contractive if $||T^{(n)}||\leq 1$ for any $n\geq 1$.
  $T$ is called an isometry if $||T(a)||_{E_2}=||a||_{E_1}$, and is a complete isometry if $T^{(n)}$ is an isometry for any $n\geq 1$.

Let $A$ be a linear subspace of a $C^*$-algebra $B$ such that $A$ contains the identity of $B$ and generates $B$ as a $C^*$-algebra.
The following definition was given by Arveson \cite{arveson}.

\begin{definition}
A closed two-sided ideal $J$ in $B$ is called a {\it boundary ideal} for $A$ if the canonical quotient map $q:B\to B/J$ is completely isometric on $A$.  A boundary ideal is called the {\it Shilov boundary} for $A$ if it contains every other boundary ideal.
\end{definition}

Note that the Shilov boundary exists and unique, \cite{arveson,hamana}.
Shilov boundary ideal is a non-commutative analog of Shilov boundary of a compact Hausdorff space $X$ relative to a subspace $\mathcal A$ of the space $C(X)$ of continuous functions on $X$ which is by definition the smallest closed subset $K$ of $X$ such that every function in $\mathcal A$ achieves its maximum modulus on
 $K$.

Let us give some examples of Shilov boundary and Shilov boundary ideals.
\begin{example}\rm
\begin{itemize}
\item If $\mathbb D=\{z\in \mathbb C^n: |z|\leq 1\}$ is the unit disk. It is known that any holomorphic function on $\mathbb D$ attains its maximum on the unit disk
$\mathbb U=\{z\in\mathbb C^n:|z|=1\}$ and moreover it is the smallest closed set with this property and hence $\mathbb U$ is the Shilov boundary of $\mathbb D$ with respect to the set of holomorphic functions $A(\mathbb D)$. The ideal $J=\{f\in C(\mathbb D): f|_{\mathbb U}=0\}$  is the Shilov ideal of the $C^*$-algebra $C(\mathbb D)$ with respect to
$A(\mathbb D)$.
\item A q-analog of $C(\mathbb D)$ is the universal enveloping $C^*$-algebra of $Pol(Mat_{n,1})_q$. It was proved by L.Vaksman, \cite{vaksman-boundary} that a closed two-sided  ideal generated by $\sum_{j=1}^n z_jz_j^*-1$ is the Shilov boundary ideal for the closed unital algebra generated by $z_i$, $i=1,\ldots,n$, which is a $q$-analog of the algebra of  holomorphic functions on
    $\mathbb D$.
\end{itemize}
\end{example}

\medskip

In $C(Mat_n)_q$ consider a closed two-sided ideal $J$ generated by $$\sum_{j=1}^n q^{2n-\alpha-\beta}z_j^\alpha(z_j^\beta)^*-\delta^{\alpha\beta}, \alpha,\beta=1,\ldots n,$$ where $\delta^{\alpha\beta}$ is the Kronecker symbol.
The ideal $J$ is a $*$-ideal, i.e. $J=J^*$. The quotient algebra $\mathbb C(S(\mathbb D))_q:=C(Mat_n)_q/J$ is a $U_qsu_{n,n}$-module $*$-algebra called the algebra of continuous functions on the Shilov boundary of a quantum matrix ball.
The canonical homomorphism
$$j_q:C(Mat_n)_q\to C(S(\mathbb D))_q$$
is a $q$-analog of the restriction operator which maps a continuous functions on the disk $\mathbb D=\{z\in Mat_n:zz^*\leq 1\}$ to its restriction to the Shilov boundary $S(\mathbb D)=\{z\in Mat_n: zz^*=1\}$.

In this section we show that for $n=2$, the ideal $J$ is the Shilov boundary ideal for the (non-involutive) closed subalgebra $A(Mat_2)_q$ of $C(Mat_2)_q$ generated by $z_i^j$, $i,j=1,2$.  Our approach, similarly to \cite{vaksman-boundary}, is based on Sz.-Nagy's and Foyas' dilation theory.

\begin{theorem}
{\bf (Sz.-Nagy's dilation theorem).} Let $T\in B(H)$ with $||T||\leq 1$. Then there exists a Hilbert space $K$ containing $H$ as a subspace and a unitary $U$ on $K$ with the property that
$$T^n=P_HU^n|_H \text{ for all nonnegative integers } n.$$
\end{theorem}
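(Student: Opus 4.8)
The plan is to build the unitary $U$ in two stages: first dilate $T$ to an \emph{isometry} on a one-sided $\ell^2$-sum of copies of $H$, and then extend that isometry to a unitary on a still larger space. Since the statement only concerns nonnegative powers, the isometric dilation already carries all of the analytic content, and the final unitarization is comparatively soft.

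For the first stage I would use the defect operator. Because $\|T\|\le 1$ we have $I-T^*T\ge 0$, so $D_T:=(I-T^*T)^{1/2}$ is a well-defined bounded positive operator on $H$. Set $K_+=H\oplus H\oplus H\oplus\cdots=\ell^2(\mathbb Z_+,H)$, identify $H$ with the initial summand $\{(h,0,0,\dots)\}$, and define $V\colon K_+\to K_+$ by
\[
V(h_0,h_1,h_2,\dots)=(Th_0,\,D_Th_0,\,h_1,\,h_2,\dots).
\]
The identity $\|Th_0\|^2+\|D_Th_0\|^2=\langle (T^*T+I-T^*T)h_0,h_0\rangle=\|h_0\|^2$ shows at once that $V$ is an isometry. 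A short induction then gives
\[
V^n(h,0,0,\dots)=(T^nh,\,D_TT^{n-1}h,\dots,D_Th,\,0,\dots),
\]
so that compressing back to the initial summand annihilates every defect term and leaves exactly $P_HV^n|_H=T^n$ for all $n\ge 0$.

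For the second stage I would invoke the fact that every isometry extends to a unitary on a larger Hilbert space. Concretely, by the Wold decomposition $V$ splits as a unitary part direct-summed with a number of copies of the unilateral shift $S$ on $\ell^2(\mathbb Z_+)$; since $S$ is the restriction of the bilateral shift on $\ell^2(\mathbb Z)$ to the invariant subspace $\ell^2(\mathbb Z_+)$, replacing each unilateral shift by the corresponding bilateral shift produces a unitary $U$ on a space $K\supseteq K_+$ with $U|_{K_+}=V$. Because $K_+$ is $V$-invariant it is invariant under $U$, whence $U^n|_{K_+}=V^n$ for every $n\ge 0$; restricting to $H\subseteq K_+$ and compressing gives $P_HU^n|_H=P_HV^n|_H=T^n$, as required.

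The genuinely delicate point is the first stage, not the unitarization. One should stress why the infinite iterated construction is needed: the familiar Halmos $2\times2$ unitary $\left(\begin{smallmatrix}T&D_{T^*}\\ D_T&-T^*\end{smallmatrix}\right)$ on $H\oplus H$ dilates only the first power of $T$ and fails already for $T^2$, because its lower-left corner feeds defect vectors straight back into $H$. The role of the semi-infinite tail $H\oplus H\oplus\cdots$ is precisely to push these defect contributions permanently away from the initial summand, so that the telescoping in the displayed formula for $V^n$ leaves no residual term. Verifying that this cancellation is exact for \emph{all} $n$ simultaneously — equivalently, that the off-diagonal entries never return to the $H$-component — is the crux of the argument.
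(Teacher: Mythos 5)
Your proof is correct. Note that the paper itself offers no proof of this statement: it is quoted as the classical Sz.-Nagy dilation theorem and simply used as a tool, so there is no ``paper's approach'' to compare against. What you have written is the standard argument --- the Sch\"affer-type isometric dilation $V(h_0,h_1,\dots)=(Th_0,D_Th_0,h_1,\dots)$ on $\ell^2(\mathbb Z_+,H)$ built from the defect operator $D_T=(I-T^*T)^{1/2}$, followed by the extension of the isometry $V$ to a unitary via the Wold decomposition and the passage from the unilateral to the bilateral shift. All the steps check out: the isometry identity $\|Th\|^2+\|D_Th\|^2=\|h\|^2$, the telescoping formula for $V^n$ on the initial summand, the invariance of $K_+$ under $U$, and the compression back to $H$. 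Your closing remark that the Halmos $2\times 2$ dilation fails to be a power dilation (its square compresses to $T^2+D_{T^*}D_T$) correctly identifies why the semi-infinite tail is needed.
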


\begin{lemma}\label{theonly}
The only irreducible representations that annihilate the ideal $J$ are $\rho_{\varphi_1,\varphi_2}$ and $\gamma_{\varphi_1,\varphi_2}$, $\varphi_1$, $\varphi_2\in[0,2\pi)$.
\end{lemma}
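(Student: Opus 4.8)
The plan is to reduce the statement to a finite computation over the classification in Theorem~1. Since $C(Mat_2)_q$ is the universal enveloping $C^*$-algebra of $Pol(Mat_2)_q$, its irreducible representations are exactly the irreducible bounded $*$-representations listed there, namely $\pi_F$, $\tau_\varphi$, $\nu_{1,\varphi}$, $\nu_{2,\varphi}$, $\rho_{\varphi_1,\varphi_2}$, $\theta_\varphi$ and $\gamma_{\varphi_1,\varphi_2}$. For $n=2$ the ideal $J$ is generated by the four elements $g_{\alpha\beta}=\sum_{j=1}^2 q^{4-\alpha-\beta}z_j^\alpha(z_j^\beta)^*-\delta^{\alpha\beta}$, $\alpha,\beta=1,2$, and since $g_{12}^*=g_{21}$ it suffices to track $g_{11}$, $g_{22}$ and $g_{12}$. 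As the kernel of a representation $\pi$ is a closed two-sided ideal, $\pi$ annihilates $J$ if and only if $\pi(g_{\alpha\beta})=0$ for all $\alpha,\beta$; thus the whole lemma becomes the evaluation of these three operators in each family.

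For the computations I would record once and for all the identities $C^2=1-d(q)^2$, $(CS)(CS)^*=C^2$, $(CS)^*(CS)=(S^*C)(S^*C)^*=1-q^2d(q)^2$, together with the commutation relation $S^*C\,d(q)=q\,d(q)\,S^*C$; all of them follow immediately from $Se_n=e_{n+1}$, $Ce_n=(1-q^{2n})^{1/2}e_n$, $d(q)e_n=q^ne_n$. For the one-dimensional family $\gamma_{\varphi_1,\varphi_2}$ the verification $g_{11}\mapsto q^2(q^{-2}+0)-1=0$, $g_{22}\mapsto 0+1-1=0$, $g_{12}\mapsto 0$ is immediate. For $\rho_{\varphi_1,\varphi_2}$ one substitutes $z_2^2\mapsto CS$, $z_2^1\mapsto e^{i\varphi_1}d(q)$, $z_1^2\mapsto e^{i\varphi_2}d(q)$, $z_1^1\mapsto -e^{i(\varphi_1+\varphi_2)}q^{-1}S^*C$ and uses the identities above: $g_{22}$ gives $d(q)^2+C^2-1=0$, $g_{11}$ gives $q^2((q^{-2}-d(q)^2)+d(q)^2)-1=0$, and $g_{12}$ reduces to $e^{i\varphi_1}(-S^*C\,d(q)+q\,d(q)S^*C)=0$ by the commutation relation. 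Hence both $\rho_{\varphi_1,\varphi_2}$ and $\gamma_{\varphi_1,\varphi_2}$ annihilate $J$.

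It then remains to show that each of the remaining four families fails to kill $J$, which I would do by exhibiting a single nonzero generator in each case. The main point to watch here is that no single $g_{\alpha\beta}$ works uniformly: for $\pi_F$, $\tau_\varphi$ and $\nu_{1,\varphi}$ the image of $g_{22}$ collapses, after using $C^2=1-d(q)^2$, to a nonzero operator of the form $-d(q)^2\otimes\cdots\otimes d(q)^2$, whereas for $\nu_{2,\varphi}$ and $\theta_\varphi$ the generator $g_{22}$ does vanish and one must instead compute $g_{11}$, which becomes $-1\otimes d(q)^2$ and $-d(q)^2$ respectively. Since $d(q)^2\neq 0$, none of these four representations annihilates $J$. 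There is no genuine conceptual obstacle; the only care needed is the bookkeeping of the tensor legs and the choice of the right generator $g_{\alpha\beta}$ to witness nonvanishing, together with the observation that Theorem~1 guarantees that the list of irreducible representations is exhaustive.
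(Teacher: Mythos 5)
Your proposal is correct and is exactly the ``straightforward verification'' that the paper leaves to the reader: evaluate the generators $g_{\alpha\beta}=\sum_{j}q^{4-\alpha-\beta}z_j^\alpha(z_j^\beta)^*-\delta^{\alpha\beta}$ on each family from Theorem~1, and all of your computations (including the key observation that $g_{22}$ detects $\pi_F$, $\tau_\varphi$, $\nu_{1,\varphi}$ while one must switch to $g_{11}$ for $\nu_{2,\varphi}$ and $\theta_\varphi$) check out.
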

\begin{proof} A straightforward verification.
\end{proof}

\begin{lemma}\label{inequality}
Given a representation $\pi$  of $Pol(Mat_2)_q$ that annihilates the ideal $J$ and  $a\in Pol (Mat_2)_q$, $||\pi(a)||\le \sup_{\psi_1,\psi_2} ||\rho_{\psi_1,\psi_2}(a)||$.
\end{lemma}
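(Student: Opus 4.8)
The plan is to lift the statement to the $C^*$-level, reduce it to a weak-containment assertion between the two families of irreducible representations allowed by Lemma~\ref{theonly}, and then settle that assertion by an approximate-eigenvector (Weyl sequence) construction, which is the concrete form that Sz.-Nagy dilation takes for the shift. First I would use that $Pol(Mat_2)_q$ is $*$-bounded: a bounded $*$-representation $\pi$ killing the generators of $J$ extends to a representation of $C(Mat_2)_q$ that annihilates the closed ideal $J$, hence factors as $\pi=\tilde\pi\circ j_q$ through the quotient map $j_q\colon C(Mat_2)_q\to C(S(\mathbb D))_q$. Therefore $\|\pi(a)\|\le\|j_q(a)\|_{C(S(\mathbb D))_q}$. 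Since the norm in a $C^*$-algebra is the supremum of $\|\sigma(\cdot)\|$ over its irreducible representations, and the irreducible representations of $C(S(\mathbb D))_q$ are exactly the irreducible representations of $C(Mat_2)_q$ that annihilate $J$, Lemma~\ref{theonly} gives $\|j_q(a)\|=\max\{\sup_\psi\|\rho_{\psi_1,\psi_2}(a)\|,\ \sup_\psi\|\gamma_{\psi_1,\psi_2}(a)\|\}$. Thus the whole lemma follows once I show that every one-dimensional $\gamma_{\varphi_1,\varphi_2}$ is weakly contained in the family $\{\rho_{\psi_1,\psi_2}\}$, i.e. $|\gamma_{\varphi_1,\varphi_2}(a)|\le\sup_\psi\|\rho_{\psi_1,\psi_2}(a)\|$ for all $a$.

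For this key step I would exhibit, inside a suitably chosen $\rho_{\psi_1,\psi_2}$, unit vectors on which all four generators and their adjoints act approximately as the scalars prescribed by $\gamma_{\varphi_1,\varphi_2}$. Fix the target $(\varphi_1,\varphi_2)$, set $\lambda=e^{i\varphi_1}$, choose $\psi_1,\psi_2$ with $\psi_1+\psi_2\equiv\varphi_1+\varphi_2+\pi\pmod{2\pi}$, and put $\xi_{M,N}=N^{-1/2}\sum_{n=M}^{M+N-1}\bar\lambda^{\,n}e_n$. Because $\xi_{M,N}$ is supported on high indices one has $\|(CS-S)\xi_{M,N}\|\le q^{2(M+1)}$, $\|(S^*C-S^*)\xi_{M,N}\|\le q^{2M}$ and $\|d(q)\xi_{M,N}\|\le q^{M}$, while $\xi_{M,N}$ is a Weyl sequence for the unilateral shift, so $\|S\xi_{M,N}-\lambda\xi_{M,N}\|$ and $\|S^*\xi_{M,N}-\bar\lambda\xi_{M,N}\|$ are $O(N^{-1/2})$. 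Reading off the formulas for $\rho_{\psi_1,\psi_2}$ then yields $\rho_{\psi_1,\psi_2}(z_2^2)\xi\approx e^{i\varphi_1}\xi$, $\rho_{\psi_1,\psi_2}(z_2^1)\xi\approx 0$, $\rho_{\psi_1,\psi_2}(z_1^2)\xi\approx 0$ and $\rho_{\psi_1,\psi_2}(z_1^1)\xi\approx -e^{i(\psi_1+\psi_2-\varphi_1)}q^{-1}\xi=e^{i\varphi_2}q^{-1}\xi$, together with the conjugate relations for the adjoints; these are precisely the values $\gamma_{\varphi_1,\varphi_2}(z_i^j)$ and their conjugates. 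Since each $\rho_{\psi_1,\psi_2}(z_i^j)$ is uniformly bounded, the errors propagate multiplicatively through any fixed $*$-monomial, whence $\rho_{\psi_1,\psi_2}(a)\xi_{M,N}\to\gamma_{\varphi_1,\varphi_2}(a)\,\xi_{M,N}$ for every $a\in Pol(Mat_2)_q$ as $M,N\to\infty$. This gives $\sup_\psi\|\rho_{\psi_1,\psi_2}(a)\|\ge|\gamma_{\varphi_1,\varphi_2}(a)|$, the desired weak containment, and completes the chain of inequalities above.

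The main obstacle is exactly this last step: producing a single sequence $\xi_{M,N}$ that is simultaneously an approximate eigenvector of all four generators \emph{and} of their adjoints, and then controlling the accumulation of the $O(N^{-1/2})$ boundary errors of the Weyl sequence against the $O(q^{M})$ tail errors coming from $\|(CS-S)\xi\|$, $\|(S^*C-S^*)\xi\|$ and $\|d(q)\xi\|$ when one multiplies out an arbitrary non-commutative word in the $z_i^j,(z_i^j)^*$. The two scales must be balanced by letting $M,N\to\infty$ jointly. This is where the Sz.-Nagy philosophy enters: the admissible boundary scalars $e^{i\varphi_1}$ are exactly the approximate point spectrum of the shift $S$ that governs $z_2^2$ in the tail of $\rho_{\psi_1,\psi_2}$, equivalently the spectrum of its bilateral-shift unitary dilation, and it is this dilation spectrum that feeds the one-dimensional boundary representations $\gamma_{\varphi_1,\varphi_2}$ back into the norms of the infinite-dimensional $\rho_{\psi_1,\psi_2}$.
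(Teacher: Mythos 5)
Your proposal is correct, and its overall architecture coincides with the paper's: factor $\pi$ through $C(S(\mathbb D))_q$, invoke Lemma~\ref{theonly} to reduce the claim to the single inequality $|\gamma_{\varphi_1,\varphi_2}(a)|\le\sup_{\psi_1,\psi_2}\|\rho_{\psi_1,\psi_2}(a)\|$, and realize $\gamma_{\varphi_1,\varphi_2}$ from $\rho_{\varphi_1,\pi+\varphi_2}$ (note your phase condition $\psi_1+\psi_2\equiv\varphi_1+\varphi_2+\pi$ with $\lambda=e^{i\varphi_1}$ forces exactly this choice). Where you diverge is in how that key weak containment is established. The paper argues structurally: it shows via the series expansions $C^2=(1-q^2)\sum q^{2n}S^{n+1}(S^{n+1})^*$ and $d(q)=\sum q^n\bigl(S^n(S^n)^*-S^{n+1}(S^{n+1})^*\bigr)$ that $\mathcal A_{\rho_{\psi_1,\psi_2}}=C^*(S)$, and then applies the character $\Theta_{\varphi_1}\colon C^*(S)\to\mathbb C$, $S\mapsto e^{i\varphi_1}$, $C\mapsto 1$, $d(q)\mapsto 0$, which sends $\rho_{\varphi_1,\pi+\varphi_2}(z_i^j)$ to $\gamma_{\varphi_1,\varphi_2}(z_i^j)$ on the nose; the inequality is then immediate because a homomorphic image is norm-dominated. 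You instead exhibit a concrete Weyl sequence $\xi_{M,N}$ witnessing that $e^{i\varphi_1}$ lies in the approximate point spectrum of the tail of $CS$, and propagate the two error scales ($O(N^{-1/2})$ from the shift, $O(q^M)$ from $C-1$ and $d(q)$) through arbitrary $*$-monomials. The two arguments prove the same fact --- your approximate eigenvectors are precisely what makes the paper's character well defined --- but the character route is shorter and avoids the bookkeeping of error accumulation in non-commutative words, while your route is self-contained and does not require identifying $\mathcal A_{\rho_{\psi_1,\psi_2}}$ with the Toeplitz algebra or verifying the series formulas. Both are complete proofs; no gap.
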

\begin{proof}
We start by noting that the operators $C$, $S$, $d(q)\colon \ell^2(\mathbb{Z}_{+})\rightarrow \ell^2(\mathbb{Z}_{+})$ defined in Section \ref{secpolm2q} satisfy the equalities
\begin{equation}\label{Cformula}
C^2=(1-q^2)\sum_{n=0}^{\infty} q^{2n} S^{n+1} (S^{n+1})^*,\quad d(q)=\sum_{n=0}^\infty q^n \left(S^n(S^n)^*-S^{n+1}(S^{n+1})^*\right)
\end{equation}
and hence the $C^*$-algebra, $C^*(S)$, generated by $S$ coincides with the one generated by $S$, $C$ and $d(q))$, and the mapping $S\mapsto e^{i\varphi}$ can be naturally extended to $*$-homomorphism
\[
\Theta_{\varphi}\colon C^*(S)\rightarrow\mathbb{C},\quad \Theta_{\varphi}(S)=e^{i\varphi},\quad \Theta_{\varphi}(C)=1,\quad\Theta_{\varphi} (d(q))=0.
\]
Recall that
\begin{eqnarray}
\rho_{\varphi_1,\varphi_{2}}(z_2^2)&=&CS,\nonumber\\
\rho_{\varphi_1,\varphi_2}(z_2^1) &=&e^{i\varphi_1}d(q),\label{rho}\\
\rho_{\varphi_1,\varphi_2}(z_1^2)&=&e^{i\varphi_2}d(q),\nonumber\\
\rho_{\varphi_,\varphi_2}(z_1^1)&=&
-e^{i(\varphi_1+\varphi_2)}q^{-1}S^*C,\nonumber
\end{eqnarray}
and
\[
\gamma_{\varphi_1,\varphi_2}(z_2^2)=e^{i\varphi_1},\ \gamma_{\varphi_1,\varphi_2}(z_2^1)=\gamma_{\varphi_1,\varphi_2}(z_1^2)=0,\
\gamma_{\varphi_1,\varphi_2}(z_1^1)=e^{i\varphi_2}q^{-1}.
\]

 For a representation $\pi$ of $Pol(Mat_2)_q)$ we let  $\mathcal A_\pi$ denote  the unital $C^*$-algebra generated by $\pi(z_i^j)$, $i,j=1,2$. Then
$\mathcal{A}_{\rho_{\varphi_1,\varphi_2}}= C^*(S)$. In fact it follows from $(\ref{rho})$ and (\ref{Cformula}) that $\mathcal{A}_{\rho_{\varphi_1,\varphi_2}}\subset C^*(S)$.
 To see the other inclusion we note that $0$ is an isolated point in the spectrum $\sigma(C)$ of $C$, and hence the function $f$ given by $f(0)=0$  and $f(t)=t^{-1}$, $t\in\sigma(C)$,
  $t\ne 0$, is continuous on $\sigma(C)$.
  Therefore, since  $T:=\rho_{\varphi_1,\varphi_{2}}(z_2^2)=CS$ one has $C=((1-q^{-2})I+q^{-2}T^*T)^{1/2}\in \mathcal{A}_{\rho_{\varphi_1,\varphi_2}}$ and
$S=f(C)T\in \mathcal{A}_{\rho_{\varphi_1,\varphi_2}}$ implying  $C^*(S)\subset \mathcal{A}_{\rho_{\varphi_1,\varphi_2}}$.

Evidently, $\mathcal{B}_{\gamma_{\varphi_1,\varphi_2}}=\mathbb{C}$.
The homomorphism $\Theta_{\varphi_1}$ gives rise to a homomorphism between $\mathcal{A}_{\rho_{\varphi_1,\pi+\varphi_2}}$ and $\mathcal{B}_{\varphi_1,\varphi_2}$:
\[
\Theta_{\varphi_1}(\rho_{\varphi_1,\pi+\varphi_2}(z_i^j))=\gamma_{\varphi_1,\varphi_2}(z_i^j),\quad i,j=1,2
\]
proving that
\[
|\gamma_{\varphi_1,\varphi_2}(a)|=|\Theta_{\varphi_1}(\rho_{\varphi_1,\pi+\varphi_2}(a))|\leq\|\rho_{\varphi_1,\pi+\varphi_2}(a)\|\le
\sup_{\psi_1,\psi_2}||\rho_{\psi_1,\psi_2}(a)||,
\quad i=1,2.
\]
\end{proof}
\begin{lemma}
The ideal $J$ is a boundary ideal, i.e. the restriction $j_{A(Mat_2)_q}$ of $j_q$ to $A(Mat_2)_q$ is a complete isometry.
\end{lemma}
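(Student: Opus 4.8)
The plan is to verify the complete isometry of $j_q|_{A(Mat_2)_q}$ by a norm comparison. Since $j_q$ is a $*$-homomorphism it is completely contractive, so $\|j_q^{(n)}(a)\|\le\|a\|_{M_n(C(Mat_2)_q)}$ always holds and only the reverse inequality on $A(Mat_2)_q$ needs proof. First I would rewrite both sides concretely: by Theorem \ref{th1} the Fock representation is faithful, so $\|a\|_{M_n(C(Mat_2)_q)}=\|\pi_F^{(n)}(a)\|$; and by Lemmas \ref{theonly} and \ref{inequality} every representation of $C(Mat_2)_q/J$ is dominated by the family $\{\rho_{\psi_1,\psi_2}\}$, so the quotient norm equals $\sup_{\psi_1,\psi_2}\|\rho_{\psi_1,\psi_2}(\cdot)\|$. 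As this is an equality of $C^*$-norms it is automatically completely isometric, hence $\|j_q^{(n)}(a)\|=\sup_{\psi_1,\psi_2}\|\rho_{\psi_1,\psi_2}^{(n)}(a)\|$. The whole statement therefore reduces to proving
\[
\|\pi_F^{(n)}(a)\|\le\sup_{\psi_1,\psi_2}\|\rho_{\psi_1,\psi_2}^{(n)}(a)\|,\qquad a\in M_n(A(Mat_2)_q),
\]
an inequality in which, crucially, only products of the generators $z_i^j$ (and no adjoints) occur.

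The key device is a Sarason-type compression. I would seek a $*$-representation $\widetilde\pi$ of $Pol(Mat_2)_q$ on a Hilbert space $K\supseteq H_F=\ell^2(\mathbb Z_+)^{\otimes4}$ such that (i) $\widetilde\pi$ annihilates $J$, and (ii) $H_F$ is semi-invariant for the non-self-adjoint algebra generated by the operators $\widetilde\pi(z_i^j)$, with $\pi_F(z_i^j)=P_{H_F}\widetilde\pi(z_i^j)|_{H_F}$ for all $i,j$. Property (ii) makes the compression $x\mapsto P_{H_F}\widetilde\pi(x)|_{H_F}$ multiplicative on words in the generators, so $\pi_F(a)=P_{H_F}\widetilde\pi(a)|_{H_F}$ for every holomorphic $a$, and likewise for amplifications. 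Hence $\|\pi_F^{(n)}(a)\|\le\|\widetilde\pi^{(n)}(a)\|$, while (i) together with Lemma \ref{inequality} bounds the right-hand side by $\sup_{\psi_1,\psi_2}\|\rho_{\psi_1,\psi_2}^{(n)}(a)\|$. This would close the argument.

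To build $\widetilde\pi$ I would dilate the one-variable building blocks that feed the coaction $\mathcal D$. The decisive input is the Sz.-Nagy dilation theorem applied to the quantum disk: the weighted shift $\rho_F(z)=CS$ is a contraction, its minimal unitary dilation $V$ is automatically a boundary representation of $Pol(\mathbb C)_q$ (a unitary obeys both $z^*z=q^2zz^*+(1-q^2)$ and $zz^*=1$), and $\ell^2(\mathbb Z_+)$ is semi-invariant, so $\rho_F(p)=P\,p(V)|_{\ell^2(\mathbb Z_+)}$ for holomorphic $p$. Replacing the Fock factor $\mathcal F_\varphi=\rho_F\circ\Pi_\varphi$ by the direct integral over $\vartheta$ of its boundary factors $\chi_{\vartheta,\varphi}=\rho_\vartheta\circ\Pi_\varphi$ and forming $(\chi_{\vartheta,\varphi}\otimes\pi_0\otimes\pi_0)\circ\mathcal D$ preserves all defining relations (because $\mathcal D$ is a $*$-homomorphism) and produces a representation that kills $J$; one checks that it splits into copies of the $\rho_{\psi_1,\psi_2}$. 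That the boundary representations are exactly the symbols of $\pi_F$ on the inner legs, namely $(\mathrm{id}\otimes\Theta_{\psi_1}\otimes\Theta_{\psi_2}\otimes\Theta_{\psi_3})\circ\pi_F=\rho_{\psi_1,\psi_2}$ with $\Theta_\psi$ the character of $C^*(S)$ sending $S\mapsto e^{i\psi}$, $C\mapsto1$, $d(q)\mapsto0$, confirms that the dilation we seek is a reverse of this symbol map on those legs.

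The hard part will be to realize the full Fock representation $\pi_F$, not merely the three-legged $\tau_\varphi$ handled in Lemma \ref{ber}, as such a compression, and to verify the semi-invariance in (ii). Because $\pi_F$ carries four copies of $\ell^2(\mathbb Z_+)$ whereas each $\rho_{\psi_1,\psi_2}$ lives on one, the dilation must be iterated through $\mathcal D$ (or performed jointly on all legs at once), and one must control the deformation operators $C$ and $d(q)$: these are not unitary, and they are precisely the source of the discrepancy between the weighted shift $CS$ and its unitary dilation, so the naive replacement of $CS$ by multiplication by $e^{i\psi}$ would compress to the unweighted shift rather than to $\pi_F$. Showing that the compressed products of generators reproduce $\pi_F$ exactly, with the semi-invariant subspace the tensor product of the per-leg semi-invariant subspaces, is the crux; once it is established, the inequality above and hence the lemma follow at once from Lemma \ref{inequality}.
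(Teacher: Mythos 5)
Your reduction of the lemma to the inequality $\|\pi_F^{(n)}(a)\|\le\sup_{\psi_1,\psi_2}\|\rho_{\psi_1,\psi_2}^{(n)}(a)\|$ for $a\in M_n(A(Mat_2)_q)$ is exactly right, and so is the general device: a Sz.-Nagy dilation to a $*$-representation annihilating $J$ for which $H_F$ is semi-invariant, followed by Lemma \ref{inequality}. But the argument stops at what you yourself call the crux: you never actually produce the dilation $\widetilde\pi$, and you correctly observe that a simultaneous dilation of all four legs cannot work naively, since the legs also carry the non-unitary operators $d(q)$, $C$ and $S^*C$ and semi-invariance for mixed products would then fail. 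That unresolved step is the entire content of the lemma, so as it stands this is a genuine gap rather than a proof.

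The missing idea is to factor the passage from $\pi_F$ to the boundary through the intermediate family $\tau_\varphi$, performing two successive \emph{single-leg} dilations, in each of which the dilated leg carries only powers of $CS$ (and scalars), so that the bare power-dilation identity $(CS)^k=P_HU^k|_H$ already yields the compression formula for all holomorphic words. Concretely: in $\pi_F$ the fourth tensor leg of the generators involves only $CS$ (one summand of $\pi_F(z_1^1)$) and $1$, so replacing that single $CS$ by its unitary dilation $U$ gives a $*$-representation $\Psi$ of $Pol(Mat_2)_q$ with $\pi_F(a)=(1_{H^{\otimes3}}\otimes P_H)\Psi(a)|_{H^{\otimes3}\otimes H}$ for $a\in A(Mat_2)_q$, whose irreducible pieces (diagonalize $U$) are the $\tau_\varphi$; hence $\|\pi_F^{(n)}(a)\|\le\sup_\varphi\|\tau_\varphi^{(n)}(a)\|$. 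Then Lemma \ref{ber} rewrites $\tau_\varphi$ as $(\mathcal F_\varphi\otimes\pi_0\otimes\pi_0)\circ\mathcal D$, whose \emph{first} leg again carries only $q^{-1}CS$ and the scalar $e^{i\varphi}$; dilating that single $CS$ produces a $*$-representation decomposing into the $(\chi_{\varphi_1,\varphi}\otimes\pi_0\otimes\pi_0)\circ\mathcal D$, which annihilate $J$, and Lemma \ref{inequality} then finishes the estimate. Your direct-integral-over-$\vartheta$ remark is essentially this second step, but without the first step and without invoking Lemma \ref{ber} it never connects to $\pi_F$, which is precisely the difficulty you left open.
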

\begin{proof}
Since $j_q$ is a $*$-homomorphism between $C^*$-algebras, $j_q$ and hence $j_{A(Mat_2)_q}$ is a complete contraction.
Therefore it is enough to prove that for $a_{ij}\in  A(Mat_2)_q$, we have
$$||(\pi_F(a_{ij}))_{i,j}||_{M_n(C(Mat_2)_q)}\leq ||j_q^{(n)}((\pi_F(a_{ij}))||_{M_n(C(S(\mathbb D))_q)}.$$
Since by Lemma \ref{theonly} the only representations of $C(Mat_2)_q$ that annihilate the ideal $J$ are $\rho_{\varphi_1,\varphi_2}$ and $\gamma_{\varphi_1,\varphi_2}$,
$\varphi_i\in [0,2\pi)$, and
$$|\gamma_{\varphi_1,\varphi_2}(a)|\leq \sup_{\psi_1,\psi_2}||\rho_{\psi_1,\psi_2}(a)||$$
we have $$||b+J||_{C(S(\mathbb D))_q}=\sup_{\psi_1,\psi_2}||\rho_{\psi_1,\psi_2}(b)||.$$

Therefore, we must show that
$$||(\pi_F(a_{ij}))_{i,j}||_{M_n(C(Mat_2)_q)}\leq \sup_{\psi_1,\psi_2}||(\rho_{\psi_1,\psi_2}(a_{ij}))||_{M_n(B(\ell^2(\mathbb Z_+)\otimes\ell^2 (\mathbb Z_+)))}$$
for all $(a_{ij})\in M_n(A(Mat_2)_q)$. We will do this in two steps.

{\bf Step 1.} It follows from the definition of operators $C$ and $S$ that $T=CS$ is a contraction on $H=\ell^2(\mathbb Z_+)$. By Sz.-Nagy dilation theorem there exists a unitary operator $U$ on a Hilbert space $K$ with $K\supset H$ such that
$(CS)^n=P_HU^n|_H$ for any $n=1,2,\ldots$.  Consider a mapping $\Psi:\{z_i^j,i,j=1,2\}\to B(H^{\otimes 4})$  given by
$$\Psi(z_i^j)=\pi_F(z_i^j), (i,j)\ne(1,1),\text{ and }  \Psi(z_1^1)=1\otimes d(q)\otimes d(q)\otimes U-q^{-1}S^*C\otimes CS\otimes CS\otimes 1.$$
Then $\Psi$ extends uniquely to a homomorphism of $A(Mat_2)_q$ and $$\pi_F(a)=(1_{H^{\otimes 3}}\otimes P_H)\Psi(a)|_{H^{\otimes 3}\otimes H}, \ a\in A(Mat_2)_q.$$ Moreover, it is easy to see that
$\Psi$ has an extension to a $*$-representation  of $Pol(Mat_2)_q$ whose irreducible subrepresentations are unitarily equivalent to $\tau_\varphi$, $\varphi\in[0,2\pi)$.
Therefore
$$||\pi_F(a)||\leq ||\Psi(a)||\leq\sup_{\varphi\in[0,2\pi)}||\tau_\varphi(a)||, a\in A(Mat_2)_q.$$
Similarly,
$$||(\pi_F(a_{ij}))||_{M_n(B(H^{\otimes 4}))}\leq\sup_{\varphi\in[0,2\pi)}||(\tau_\varphi(a_{ij}))||_{M_n(B(H^{\otimes 3}))}, (a_{ij})\in M_n(A(Mat_2)_q).$$

 {\bf Step 2.} Our next goal is to prove  that for any $\varphi\in[0,2\pi)$
 $$||\tau_\varphi(a)||\leq\sup_{\varphi_1,\varphi_2}||\rho_{\varphi_1,\varphi_2}(a)||, a\in A(Mat_2)_q.$$

It is a routine to verify that the representations $(\chi_{\varphi_1,\varphi_2}\otimes\pi_0\otimes\pi_0)\circ\mathcal D$ annihilate the ideal $J$ for any $\varphi_1,\varphi_2\in [0,2\pi)$. In particular this fact implies
\[
\sup_{\varphi_1,\varphi_2}||(\chi_{\varphi_1,\varphi_2}\otimes\pi_0\otimes\pi_0)\circ\mathcal D(z_i^j)||\le \sup_{\varphi_1,\varphi_2}||\rho_{\varphi_1,\varphi_2}(z_i^j)||,\ i,j=1,2.
\]
So, it will be enough for us to show that for any $\varphi\in [0,2\pi)$ and $i,j=1,2$
\[
||\tau_{\varphi}(z_i^j)||\le\sup_{\varphi_1,\varphi_2}
||(\chi_{\varphi_1,\varphi_2}\otimes\pi_0\otimes\pi_0)\circ\mathcal D(z_i^j)||.
\]
Indeed, as in the first step let $U$ be a unitary operator on a Hilbert space $K$ with $K\supset H$, $H=\ell^2(\mathbb Z_+)$, such that
 $$(CS)^n=P_HU^n|_H.$$
 Then the mapping $\Psi$ defined on the generators $z_i^j$, $i,j=1,2$, that replaces $CS$ by $U$ in the first component in the expressions for $\tau_{\varphi}(z_i^j)=\mathcal F_\varphi\otimes\pi_0\otimes\pi_0(\mathcal D(z_i^j))$ can be extended to a $*$-representation $\Psi$ of $Pol(Mat_2)_q$ whose all irreducible subrepresentations are unitarily equivalent to $(\chi_{\varphi_1,\varphi}\otimes\pi_0\otimes\pi_0)\circ\mathcal D$, and, moreover,
 $$ \mathcal F_\varphi\otimes\pi_0\otimes\pi_0(\mathcal D(a))=(P_H\otimes 1_{H^{\otimes 2}})\Psi(a)|_{H\otimes H^{\otimes 2}}, a\in A(Mat_2)_q$$

 Hence for $a\in A(Mat_2)_q$
 \begin{eqnarray*}
 ||\tau_\varphi(a)||&=&||(\mathcal F_\varphi\otimes\pi_0\otimes\pi_0)(\mathcal D)(a))||\leq||\Psi(a)||\\
 &\leq&
 \sup_{\varphi_1\in[0,2\pi)}|| (\chi_{\varphi_1,\varphi}\otimes\pi_0\otimes\pi_0)(\mathcal D(a))||\\
 &\leq&\sup_{\varphi_1,\varphi_2}||\rho_{\varphi_1,\varphi_2}(a)||,
 \end{eqnarray*}
the last inequality is due to Lemma \ref{inequality}. Using similar arguments one gets that
$$||\tau_\varphi^{(n)}((a_{ij}))||_{M_n(B(H^{\otimes 3}))}\leq\sup_{\varphi_1,\varphi_2}||(\rho_{\varphi_1,\varphi_2}(a_{ij}))||_{M_n(B(H^{\otimes 2}))}, (a_{ij})\in M_n(A(Mat_2)_q).$$
Combining the results from Step 1 and Step 2 we obtain
 $$||\pi_F^{(n)}((a_{ij}))||_{M_n(B(H^{\otimes 4}))}\leq\sup_{\varphi_1,\varphi_2}||(\rho_{\varphi_1,\varphi_2}(a_{ij}))||_{M_n(B(H^{\otimes 2}))} \text{ for all }(a_{ij})\in M_n(A(Mat_2)_q),$$
 giving the statement of the theorem.

\end{proof}

\begin{remark} \rm We have proved that for any $a\in A(Mat_2)_q$, $\pi_F(a)=P_H\psi(a)|_H$, where $\psi$ is a $*$-representation of $Pol(Mat_2)_q$ that annihilates the ideal $J$.
\end{remark}

\begin{theorem}
The ideal $J$ is the Shilov boundary ideal for the subalgebra $A(Mat_2)_q$.
\end{theorem}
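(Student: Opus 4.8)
The plan is to prove $J=J_S$, where $J_S$ denotes the Shilov boundary ideal of $A(Mat_2)_q$ in $B:=C(Mat_2)_q$. By the previous lemma $J$ is a boundary ideal, so $J\subseteq J_S$, and the whole task is the reverse inclusion $J_S\subseteq J$. The decisive structural input is already recorded in the proof that $J$ is a boundary ideal: for every $b\in B$ one has $\|b+J\|=\sup_{\psi_1,\psi_2}\|\rho_{\psi_1,\psi_2}(b)\|$, so the family $\{\rho_{\varphi_1,\varphi_2}\}$ is a faithful family of representations of $B/J$. Consequently $b\in\bigcap_{\varphi_1,\varphi_2}\ker\rho_{\varphi_1,\varphi_2}$ iff $\|b+J\|=0$, i.e. $\bigcap_{\varphi_1,\varphi_2}\ker\rho_{\varphi_1,\varphi_2}=J$ (and, since $\gamma_{\varphi_1,\varphi_2}=\Theta_{\varphi_1}\circ\rho_{\varphi_1,\pi+\varphi_2}$, the characters $\gamma_{\varphi_1,\varphi_2}$ need not be treated separately). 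It therefore suffices to prove that each $\rho_{\varphi_1,\varphi_2}$ annihilates $J_S$, i.e. factors through the quotient $q_S\colon B\to B/J_S=C^*_{\mathrm{env}}(A(Mat_2)_q)$; intersecting the kernels then yields $J_S\subseteq\bigcap_{\varphi_1,\varphi_2}\ker\rho_{\varphi_1,\varphi_2}=J$.

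To show that $\rho_{\varphi_1,\varphi_2}$ factors through $q_S$ I would prove that it is a \emph{boundary representation} for $A(Mat_2)_q$ and then run the standard Arveson extension argument. Since $J_S$ is a boundary ideal, $q_S$ is completely isometric on $A(Mat_2)_q$, so $\rho_{\varphi_1,\varphi_2}|_{A}\circ(q_S|_{A})^{-1}$ is a complete contraction on $q_S(A(Mat_2)_q)$, which extends by Arveson's extension theorem to a unital completely positive map $\Phi\colon B/J_S\to B(\ell^2(\mathbb Z_+))$. The composition $\Phi\circ q_S$ is then a u.c.p. extension of $\rho_{\varphi_1,\varphi_2}|_{A}$ to $B$, so the unique extension property forces $\Phi\circ q_S=\rho_{\varphi_1,\varphi_2}$ and hence $J_S=\ker q_S\subseteq\ker\rho_{\varphi_1,\varphi_2}$.

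It remains to verify that $\rho_{\varphi_1,\varphi_2}$ is a boundary representation. Here I would use that $\rho_{\varphi_1,\varphi_2}$ is irreducible and maps $B$ onto $\mathcal A_{\rho_{\varphi_1,\varphi_2}}=C^*(S)$ (established above), the Toeplitz algebra, which contains the ideal $\mathcal K$ of compact operators with $C^*(S)/\mathcal K\cong C(\mathbb T)$; the symbol map $\sigma$ sends $S\mapsto e^{i\theta}$, $C\mapsto 1$, $d(q)\mapsto 0$, and its composition with evaluation at $e^{i\varphi}$ is $\Theta_\varphi$. By Arveson's boundary theorem for irreducible operator systems containing the compacts \cite{arveson}, $\rho_{\varphi_1,\varphi_2}$ is a boundary representation provided $\sigma$ is not completely isometric on $\rho_{\varphi_1,\varphi_2}(A(Mat_2)_q)$. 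This is immediate and is the crucial quantum feature: the generator $z_2^1$ satisfies $\rho_{\varphi_1,\varphi_2}(z_2^1)=e^{i\varphi_1}d(q)$, and $d(q)=\mathrm{diag}(q^n)$ is a nonzero \emph{compact} operator of norm $1$ (attained at $n=0$), whereas $\sigma(d(q))=0$. Thus $\sigma$ drops the norm of a single generator from $1$ to $0$, so it is not even isometric on $\rho_{\varphi_1,\varphi_2}(A(Mat_2)_q)$, a fortiori not completely isometric.

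Assembling the pieces gives $J\subseteq J_S\subseteq\bigcap_{\varphi_1,\varphi_2}\ker\rho_{\varphi_1,\varphi_2}=J$, whence $J=J_S$. I expect the main obstacle to be conceptual rather than computational: setting up the reduction to boundary representations correctly and invoking Arveson's boundary theorem in the right form for the non-self-adjoint generating algebra $A(Mat_2)_q$, together with the passage from ``boundary representation'' to ``annihilates $J_S$''. Once that framework is in place, the key observation—that a generator is sent to a nonzero compact operator with vanishing symbol, something impossible on the classical Shilov boundary $U(2)$ where no compacts occur—makes the decisive step essentially automatic.
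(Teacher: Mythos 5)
Your reduction is sound in two of its three steps: the identification $J=\bigcap_{\varphi_1,\varphi_2}\ker\rho_{\varphi_1,\varphi_2}$ does follow from the norm formula $\|b+J\|=\sup_{\psi_1,\psi_2}\|\rho_{\psi_1,\psi_2}(b)\|$ established in the boundary-ideal lemma, and your second paragraph (every boundary representation annihilates every boundary ideal, via Arveson extension plus the unique extension property) is the standard correct argument. The gap is in the step where you conclude that $\rho_{\varphi_1,\varphi_2}$ \emph{is} a boundary representation for $A(Mat_2)_q\subseteq C(Mat_2)_q$. Arveson's boundary theorem, applied to the concrete irreducible operator space $\rho_{\varphi_1,\varphi_2}(A(Mat_2)_q)$ whose generated $C^*$-algebra is the Toeplitz algebra $C^*(S)\supseteq\mathcal K$, gives only that the identity representation of $C^*(S)$ has the unique extension property relative to $\rho_{\varphi_1,\varphi_2}(A(Mat_2)_q)$: uniqueness among u.c.p.\ maps \emph{defined on $C^*(S)=\rho_{\varphi_1,\varphi_2}(C(Mat_2)_q)$}. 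What your extension argument requires is uniqueness among u.c.p.\ maps $\Phi$ defined on all of $C(Mat_2)_q$ with $\Phi|_{A}=\rho_{\varphi_1,\varphi_2}|_A$; such a $\Phi$ need not factor through $\rho_{\varphi_1,\varphi_2}$, so the ``downstairs'' statement does not imply the ``upstairs'' one (the implication goes only the other way: UEP for $A\subseteq B$ pushes forward along a quotient, not back). The transfer would be automatic if $\rho_{\varphi_1,\varphi_2}|_A$ were completely isometric, for then $C^*(S)$ would be the $C^*$-envelope; but it is not even injective, since $\rho_{0,0}(z_2^1-z_1^2)=0$. That the transfer can genuinely fail is shown by a toy example: in $B=C(\mathbb T)\oplus\mathbb C$ with $A=\{(f,f(0)):f\in A(\mathbb D)\}$ (which generates $B$), the character $\rho(f,\lambda)=\lambda$ has image $\mathbb C$, whose identity representation trivially has the unique extension property for $\rho(A)=\mathbb C$, yet $\rho$ is not a boundary representation of $B$ for $A$ because the state $\Phi(f,\lambda)=\int_{\mathbb T}f\,dm$ agrees with $\rho$ on $A$ and differs from $\rho$.

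So the decisive claim is unproved as stated; it may well be true, but it needs an argument about u.c.p.\ maps on $C(Mat_2)_q$ itself (or on $C(Mat_2)_q/J$), not merely on the image algebra. The paper sidesteps boundary representations altogether: it takes an arbitrary boundary ideal $I\supseteq J$, notes that the further quotient is isometric on $A(Mat_2)_q+J$, restricts attention to the commutative $C^*$-algebra generated by the commuting normal elements $z_1^2+J$ and $z_2^1+J$ (with joint spectrum $\bigcup_{k\ge 0}q^k\mathbb T^2\cup\{(0,0)\}$), and uses the classical maximum principle for holomorphic functions on the bidisc to show that the set $K$ of parameters with $\rho_{\varphi_1,\varphi_2}(I)=0$ is dense, whence $I/J\subseteq\bigcap_{(\varphi_1,\varphi_2)\in K}\ker\tilde\rho_{\varphi_1,\varphi_2}=\{0\}$. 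If you want to keep your route, the missing ingredient is precisely a proof that each $\rho_{\varphi_1,\varphi_2}$ (or at least a separating family of them) has the unique extension property relative to $A(Mat_2)_q$ inside $C(Mat_2)_q$.
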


\begin{proof}
Assume that $I$ is a boundary ideal for $ A(Mat_2)_q$ with $I\supset J$. We have, in particular, that the quotient maps $j_q: C(Mat_2)_q\to C(Mat_2)_q/J=C(S(\mathbb D))_q$  and
$i_q: C(Mat_2)_q\to C(Mat_2)_q/I=(C(Mat_2)_q/J)/(I/J)=C(S(\mathbb D))_q/(I/J)$ are isometries when restricted to $ A(Mat_2)_q$.
Therefore for
 $a\in A(Mat_2)_q$ we have $$\|a+J\|=\|a\|=\|(a+J)+I/J\|$$
  and hence
the quotient map $k_q:C(S(\mathbb D))_q\to C(S(\mathbb D))_q/(I/J)$ is an isometry when restricted to $A(Mat_2)_q+J$. In particular,
$0\ne ||z_i^j||=||z_i^j+J||=||(z_i^j+J)+I/J||$, $i\ne j$.

If $T$ is an irreducible representation of $C(S(\mathbb D))_q/(I/J)$ such that $T((z_i^j+J)+I/J)\ne 0$ then the representation $T\circ k_q$ is an irreducible representation of
$C(Mat_2)_q/J$ which does not vanish on $z_i^j+J$, $i\ne j$, and $T\circ k_q(I/J)=0$.
 The only irreducible representations of $C(S(\mathbb D))_q$ that do not vanish on $z_i^j+J$, $i\ne j$, are $\tilde\rho_{\varphi_1,\varphi_2}(a+J):=\rho_{\varphi_1,\varphi_2}(a)$. Therefore, $T\circ k_q$ is unitarily equivalent to one of $\tilde\rho_{\varphi_1,\varphi_2}$ and hence $T\circ k_q(I/J)=0$ implies $I/J\subset \ker\tilde\rho_{\varphi_1,\varphi_2}$.
Let
$$K=\{(\varphi_1,\varphi_2)\in [0,2\pi)\times[0,2\pi): \rho_{\varphi_1,\varphi_2}(I)=0\} \text{ and }X_K=\{(e^{i\varphi_1},e^{i\varphi_2}):(\varphi_1,\varphi_2)\in K\}.$$
We want to see that $K$ is dense in $[0,2\pi)\times[0,2\pi)$.

In $C(S(\mathbb D))_q$ consider the subalgebra generated by $z_i^j+J$, $i\ne j$. It is easily seen that the algebra is commutative and that the elements $z_i^j+J$, $i\ne j$ are normal in $C(S(\mathbb D))_q$, i.e. they commute  with their adjoints. This follows from the fact that the  operators
$\rho_{\varphi_1,\varphi_2}(z_i^j)$, $i\ne j$  commute and are normal for any $\varphi_1,\varphi_2\in [0,2\pi)$. The joint spectrum of
$\{\tilde\rho_{\varphi_1,\varphi_2}(z_1^2+J),  \tilde\rho_{\varphi_1,\varphi_2}(z_2^1+J) \}$ is $\{(e^{i\varphi_1}q^k, e^{i\varphi_2}q^k): k=0,1,\ldots\}\cup\{(0,0)\}$.
If  $T$ is  an irreducible representation of $C(S(\mathbb D))_q/(I/J)$ then it follows
from the description of the representations $\rho_{\varphi_1,\varphi_2}$ that if $(e^{i\varphi_1}, e^{i\varphi_2})$ is in the joint spectrum of $\{T\circ k_q(z_1^2+J),  T\circ k_q(z_2^1+J) \}$ then $T\circ k_q$ is unitarily equivalent to $\tilde\rho_{\varphi_1,\varphi_2}$ and hence $(\varphi_1,\varphi_2)\in K$.

Now, given a holomorphic function on $\mathbb D^2=\{(\xi_1,\xi_2)\in\mathbb C^2: |\xi_1|<1, |\xi_2|<1\}$ which is also continuous on $\overline{\mathbb D^2}$ we have
$$\|f(z_1^2+J,z_2^1+J)\|=\|f((z_1^2+J)+I/J,(z_2^1+J)+I/J\|.$$
As
\begin{eqnarray*}
\|f(z_1^2+J,z_2^1+J)\|&=&\sup_{(\varphi_1,\varphi_2)\in [0,2\pi)^2}\|f(\rho_{\varphi_1,\varphi_2}(z_1^2), \rho_{\varphi_1,\varphi_2}(z_2^1))\|\\
&=&\sup\{|f(\xi_1,\xi_2)|:(\xi_1,\xi_2)\in \cup_{k\geq 0} q^k\mathbb T^2\}=\sup_{(\xi_1,\xi_2)\in \mathbb T^2}|f(\xi_1,\xi_2)|\\
&=&\sup_{(\xi_1,\xi_2)\in \mathbb D^2}|f(\xi_1,\xi_2)|,
\end{eqnarray*}
(here $\mathbb T^2=\{(\xi_1,\xi_2)\in \mathbb C^2:|\xi_1|=|\xi_2|=1\}$ and the last two equalities follows from the maximum principle), and
\begin{eqnarray*}
&&\|f((z_1^2+J)+I/J,(z_2^1+J)+I/J)\|\\
&=&\sup\{f(T\circ k_q(z_1^2+J), T\circ k_q(z_2^1+J)): T\in \text{Irrep}( C(S(\mathbb D))_q/(I/J))\}\\
&=&\text{max}\{\sup_{(\varphi_1,\varphi_2)\in K}\|f(\rho_{\varphi_1,\varphi_2}(z_1^2), \rho_{\varphi_1,\varphi_2}(z_2^1))\|, f(0,0)\}\\
&=&\sup\{|f(\xi_1,\xi_2)|:(\xi_1,\xi_2)\in \cup_{k\geq 0} q^kX_K\}
\end{eqnarray*}
($\text{Irrep}(A)$ denote the set of all irreducible representations of $A$),
we obtain $$\sup\{|f(\xi_1,\xi_2)|:(\xi_1,\xi_2)\in \mathbb D^2\}=\sup\{|f(\xi_1,\xi_2)|:(\xi_1,\xi_2)\in \cup_{k\geq 0} q^kX_K\}.$$
Hence $\overline{\cup_{k\geq 0} q^kX_K}$ contains the Shilov boundary of $\mathbb D^2$ which is $\mathbb T^2$. Therefore $\mathbb T^2\supset \overline X_K\supset \mathbb T^2$ giving that $K$ is dense in $[0,2\pi)\times [0,2\pi)$.


 This implies $$I/J\subset \cap_{(\varphi_1,\varphi_2)\in K} \ker \tilde\rho_{\varphi_1,\varphi_2}=\{0\}$$
and $I=J$.
\end{proof}

\section*{Acknowledgements} The work on this paper was supported by the Swedish Institute, Visby Program. The paper was initiated during the visit of  D. Proskurin  to the Department of Mathematical Sciences at Chalmers University of Technology,  the warm hospitality and stimulating atmosphere are gratefully acknowledged.

\end{document}